\newtheorem{theorem}{Theorem}[section]
\newtheorem{lemma}{Lemma}[section]
\newtheorem{proposition}{Proposition}[section]
\newtheorem{remark}{Remark}
\numberwithin{equation}{section}
\newenvironment{proof of 1.7}{\noindent{\emph{Proof of Theorem 1.7 (1).}}\ }{\hfill $\square$\par}
\newenvironment{proof of 1.10}{\noindent{\emph{Proof of Theorem 1.10.}}\ }{\hfill $\square$\par}
\begin{document}
	\begin{frontmatter}  
		\title{Spectral extrema of  $\{K_{k+1},\mathcal{L}_s\}$-free graphs \,\tnoteref{titlenote}}  
		\tnotetext[titlenote]{This work is supported by the National Natural Science Foundation  of China (Nos. 11871040, 12271337).}    %文章下标批注
		\author{Yanni Zhai$^{1,2}$}  
		%	\ead{xiyingyuan@shu.edu.cn}   
		\author{Xiying Yuan$^{1,2}$\corref{correspondingauthor}}
	
		\cortext[correspondingauthor]{Corresponding author. \\
			Email address: yannizhai2022@163.com (Yanni Zhai), xiyingyuan@shu.edu.cn (Xiying Yuan).}   
		\address{$^1$Department of Mathematics, Shanghai University, Shanghai 200444, P.R. China} 
		\address{$^2$Newtouch Center for Mathematics of Shanghai University, Shanghai 200444, P.R. China} 
		\begin{abstract}  
		For a set of graphs $\mathcal{F}$, a graph is said to be $\mathcal{F}$-free  if it does not contain any graph in $\mathcal{F}$ as a subgraph.
		Let Ex$_{sp}(n,\mathcal{F})$ denote the graphs with the maximum spectral radius among all  $\mathcal{F}$-free graphs of order $n$. A linear forest is a graph whose connected component is a path. Denote by $\mathcal{L}_s$ the family of all linear forests with $s$ edges. In this  paper the graphs in Ex$_{sp}(n,\{K_{k+1},\mathcal{L}_s\})$ will be completely characterized when $n$ is appropriately large.
		\end{abstract}   
		\begin{keyword}  
			\emph{Adjacency matrix \sep Spectral radius \sep Extremal graph \sep Linear forest} 
		\end{keyword} 
	\end{frontmatter}
	\section{Introduction}
	In this paper, the graph is considered to be simple and undirected.  The order of a graph $G=\left(V(G),\,E(G)\right)$ is the number of its vertices, and its size is the number of its edges, denoted by $e(G)$. Given two vertex-disjoint graphs $G$ and $H$, the union of graphs $G$ and $H$ is the graph $G\cup H$ with vertex set $V(G)\cup V(H)$ and edge set $E(G)\cup E(H)$.  The union of $k$ copies of $G$ is denoted by $kG$.
	The join of $G$ and $H$, denoted by $G\vee H$, is the graph obtained from $G\cup H$  by adding all edges between $V(G)$ and $V(H)$. 
	Let $K_{n_1,n_2,\cdots,n_k}$ be the complete $k$-partite graph with classes of sizes $n_1,\cdots ,n_k$. If $\sum_{i=1}^{k}n_i=n$ and $\lvert n_i-n_j\rvert \leq 1$ for any $1\leq i<j\leq k$, then $K_{n_1,n_2,\cdots,n_k}$ is called a $k$-partite Tur\'{a}n graph, denoted by $T_{n,\,k}$.
		A path of order $n$ is denoted  by $P_n$.
	Let $K_n$  denote the complete graph and $\overline{K}_n$ denote the complement graph of $K_n$.
	%The adjacent matrix $A(G)=(a_{ij})$ of $G$ is a matrix, where $a_{ij}=1$ if $v_i$ is adjacent to $v_j$, and 0 otherwise. The spectral radius of $G$ is the largest eigenvalue of $A(G)$, denoted by $\rho (G)$. 

	%For a graph $G$ and its subgraph $H$, $G-H$ denotes the subgraph induced by $V(G)\setminus V(H)$. 

%	When $h\leq k-1$, define $G_{k}(n,h)=K_{h}\vee \overline{K}_{n-h}$. When $h\geq k$, define $G_{k}(n,h)=T_{k-1}(h)\vee \overline{K}_{n-h}$.
	
    For a set  of graphs $\mathcal{F}$, define ex$(n,\mathcal{F})$  the maximum number of edges of an $\mathcal{F}$-free graph of order $n$ and Ex$(n,\mathcal{F})$ the set of $\mathcal{F}$-free graphs of order $n$ with ex$(n,\mathcal{F})$ edges. A fundamental theorem in extremal graph theory, Tur\'{a}n Theorem, gives Ex$(n,\{K_{k+1}\})=\{T_{n,\,k}\}$ for $n>k\geq 3$.  For a graph $G$, let $A(G)$ denote its adjacency matrix and let $\rho (G)$ denote the spectral radius of $G$. By Perron-Frobenius Theorem we have $\rho (G)$ is the largest eigenvalue of $A(G)$.
    Define  Ex$_{sp}(n,\mathcal{F})$ the set of $\mathcal{F}$-free graphs of order $n$ with maximum spectral radius. 
	Brualdi-Solheid-Tur\'{a}n type problem \cite{1986} is to characterize the set  Ex$_{sp}(n,\mathcal{F})$ for some given  $\mathcal{F}$. When $\mathcal{F}$ contains only one graph $F$, we write Ex$_{sp}(n,\mathcal{F})$ as Ex$_{sp}(n,F)$  for convenience.
	 In 2007, Nikiforov \cite{2007N} gave a spectral version of  Tur\'{a}n Theorem by showing that Ex$_{sp}(n,
	 K_{k+1})=\{T_{n,\,k}\}$ for $n>k\geq 3$.  
	 There have been fruitful results for the characterization of Ex$_{sp}(n,F)$.
	 In 2010, Nikiforov \cite{2010} determined Ex$_{sp}(n,F)$ when $F$ is a path or a cycle of specified length. 
	 When $F$ is a matching of given size, Ex$_{sp}(n,F)$ was determined by Feng, Yu and Zhang \cite{2007F}.
	 In 2019, Chen, Liu and Zhang \cite{2019C} considered Ex$_{sp}(n,F)$ when $F$ is a specific linear forest. In 2020, Cioab\v{a}, Feng, Tait and Zhang \cite{2020} studied Ex$_{sp}(n,F)$ when $F$ is a friendship. 
	  Cioab\v{a}, Desai and Tait \cite{2022C} characterized the graphs in Ex$_{sp}(n,F)$ when $F$ is an odd wheel.
	 %Hou et al. \cite{2019H} studied the problem when $\mathcal{F}$ is a tree of diameter at most four. 
	 For other classes of graphs, readers may be referred to \cite{2009B,2022C1,2019G,2023Ni,2023WJ}.
	Recently,  Brualdi-Solheid-Tur\'{a}n type problem has been considered when $\mathcal{F}$ contains more than one graph.
	  Write $M_s$ for a matching  of $s$ edges.  Wang, Hou and Ma \cite{2023WH} considered Ex$_{sp}(n,\mathcal{F})$ for $\mathcal{F}=\{K_{k+1},M_{s+1}\}$.
%	 Gao and Hou \cite{2019G} gave Ex$_{sp}(n,\mathcal{F})$ when $\mathcal{F}$ is a cycle. 
%	  considered Ex$_{sp}(n,\mathcal{F})$ when 
%	 Cioab\v{a}, Feng and Tait \cite{2020} considered Ex$_{sp}(n,\mathcal{F})$ when $\mathcal{F}$ is a friendship.
%	 Cioab\v{a}, Desai and Tait \cite{2022C} determined Ex$_{sp}(n,\mathcal{F})$ when $\mathcal{F}$ is an odd wheel. 
%	   Ni, Wang and Kang \cite{2023Ni} considered Ex$_{sp}(n,\mathcal{F})$ when $\mathcal{F}$ is disjoint cliques. 

	% For other classes of graphs,  

	 \begin{theorem}[{Wang, Hou and Ma~\cite[]{2023WH}}]\label{Wang}
	 	Suppose  $k\geq 2$ and  $n\geq 4s^2+9s$. Then
	 	\begin{equation*}
	 		T_{s,\,k-1}\vee \overline{K}_{n-s}\in {\rm{Ex}}_{sp}(n,\{K_{k+1},M_{s+1}\}).
	 	\end{equation*}
	 \end{theorem}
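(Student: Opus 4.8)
\medskip
\noindent\textit{Sketch of the intended argument.}\quad The plan is a Brualdi-Solheid-Tur\'{a}n type argument: start from the lower bound supplied by $G^{\ast}:=T_{s,k-1}\vee\overline{K}_{n-s}$, and then show that every graph attaining the maximum spectral radius essentially coincides with $G^{\ast}$. First one verifies that $G^{\ast}$ is $\{K_{k+1},M_{s+1}\}$-free: a clique of $G^{\ast}$ meets $T_{s,k-1}$ in at most $k-1$ vertices and $\overline{K}_{n-s}$ in at most one, so it has at most $k$ vertices; and every edge of $G^{\ast}$ has an endpoint in the $s$-element set $V(T_{s,k-1})$, so $\nu(G^{\ast})\le s$. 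Since $K_{s,n-s}\subseteq G^{\ast}$ we have $\rho(G^{\ast})\ge\sqrt{s(n-s)}$, and the exact value of $\rho(G^{\ast})$ (read off from the $k\times k$ equitable quotient matrix of $G^{\ast}$) is only needed at the very end. Fix $G\in{\rm{Ex}}_{sp}(n,\{K_{k+1},M_{s+1}\})$ with unit Perron vector $\mathbf{x}$ and $x_{u}=\max_{v}x_{v}$; then $\rho:=\rho(G)\ge\rho(G^{\ast})\ge\sqrt{s(n-s)}$, while $\rho^{2}\le 2e(G)\le 2s(n-s)+s(s-1)$ by the Erd\H{o}s-Gallai bound for $\nu\le s$ (whose second branch is the dominant one once $n\ge 4s^{2}+9s$), so $\rho=\Theta(\sqrt{sn})$.

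Second, one extracts a coarse structure from the matching constraint. Because $\nu(G)\le s$, the vertex set $C$ of a maximum matching of $G$ is a vertex cover with $|C|\le 2s$, so $I:=V(G)\setminus C$ is independent with $|I|\ge n-2s$; in particular $\rho x_{v}=\sum_{w\sim v}x_{w}\le|C|\,x_{u}\le 2s\,x_{u}$ for every $v\in I$. Fix a threshold $\varepsilon$ of order $\sqrt{s/n}\,x_{u}$ and set $D:=\{\,w\in C:x_{w}\ge\varepsilon\,\}$. Applying the eigen-equation at a vertex $w\in D$ one shows, once $n\ge 4s^{2}+9s$, that $w$ has more than $s$ neighbours in $I$; hence if $|D|\ge s+1$, greedily matching $s+1$ vertices of $D$ into $I$ would give $M_{s+1}\subseteq G$, so $|D|\le s$. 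One also checks that $u\in D$, that the vertices of $C\setminus D$ carry only a negligible part of the Perron weight, and --- sharpening the bound for $v\in I$ to $\rho x_{v}\le|D|\,x_{u}+|C\setminus D|\,\varepsilon$ --- that $\mathbf{x}$ is well controlled on $I$ as well.

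Third, and decisively, one upgrades this to $G=G[D]\vee\overline{K}_{n-s}$ with $G[D]$ a $K_{k}$-free graph on $s$ vertices. The lever is extremality: if $ab\notin E(G)$ and $G+ab$ is still $\{K_{k+1},M_{s+1}\}$-free, then $\rho(G+ab)\ge\mathbf{x}^{\top}A(G+ab)\,\mathbf{x}=\rho+2x_{a}x_{b}>\rho$, contradicting the choice of $G$; an analogous computation handles detaching a negligible-weight vertex and re-attaching it inside the high-weight part. Running such moves --- in an order that, in the delicate case $\nu(G)=s$, is organised around a fixed maximum matching so that the matching number never exceeds $s$, and using $K_{k+1}$-freeness to see that an $I$-vertex may safely be joined to all of $D$ precisely because $I$ is independent --- one forces in turn that $C\setminus D=\varnothing$, that every vertex of $I$ is adjacent to all of $D$, and that $|D|=s$ (if $|D|<s$, then $\rho(G)\le\rho(K_{|D|}\vee\overline{K}_{n-|D|})<\rho(G^{\ast})$, impossible). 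Thus $G=G[D]\vee\overline{K}_{n-s}$, and $G[D]$ is $K_{k}$-free since a $K_{k}$ in $G[D]$ together with any vertex of $I$ would give $K_{k+1}\subseteq G$. Finally, among all $K_{k}$-free graphs $H$ on $s$ vertices, $\rho(H\vee\overline{K}_{n-s})$ is maximised by $H=T_{s,k-1}$: adding any missing edge of $H$ (then completing $H$ to a complete $(k-1)$-partite graph) keeps it $K_{k}$-free and increases the spectral radius, and a Kelmans-type shift that balances the $k-1$ parts increases it further. Hence $\rho(G)\le\rho(G^{\ast})$; together with the reverse inequality this yields $G^{\ast}\in{\rm{Ex}}_{sp}(n,\{K_{k+1},M_{s+1}\})$ (and in fact $G\cong G^{\ast}$).

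The genuine obstacle is the third step: carrying out the edge-additions and the detach/re-attach operations without ever creating a forbidden $M_{s+1}$ --- the subtle situation being $\nu(G)=s$ --- or a forbidden $K_{k+1}$, and making every estimate quantitative enough to survive all the way down to $n=4s^{2}+9s$. By comparison, the lower bound for $\rho(G^{\ast})$, the vertex cover of size $2s$, and the concluding comparison of complete-multipartite spectral radii are routine.
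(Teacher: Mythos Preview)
The paper does not prove this statement: Theorem~\ref{Wang} is quoted from \cite{2023WH} as a known result that motivates the paper's own Theorem~\ref{main}, and no argument for it appears here. There is therefore no ``paper's own proof'' to compare your sketch against.

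For what it is worth, your outline is in the same spirit as the paper's proof of Theorem~\ref{main} (the $\mathcal{L}_s$ generalisation): a lower bound from the candidate extremal graph, isolation of a small set of high-Perron-weight vertices, a cardinality argument showing that set has exactly the right size, and then symmetrisation and edge-manipulation to pin down the structure. The chief methodological difference is that you lean on the matching constraint directly via a Gallai vertex cover $C$ of size at most $2s$, whereas the paper works only through the edge bound $e(G)\le hn$ coming from ${\rm ex}(n,\mathcal{L}_s)$ and a nested hierarchy $R\subseteq R''\subseteq R'$ of Perron-weight thresholds refined by first- and second-order eigen-equations; your route is more direct for matchings but does not carry over to linear forests. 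Your own caveat about the third step is well placed: the detach/re-attach moves must be sequenced so as never to push $\nu$ above $s$ or create a $K_{k+1}$, and in particular re-attaching a low-weight vertex $w\in C\setminus D$ to all of $D$ already presupposes that $G[D]$ is $K_k$-free, which at that stage has not yet been established.
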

	 Note that $M_s\in \mathcal{L}_s$ holds.  As an extension,
	 in this paper we consider the characterization of Ex$_{sp}(n,\{K_{k+1},\mathcal{L}_s\})$
	   when $n$ is appropriately large. 
	   When Ex$_{sp}(n,\mathcal{F})=\{G\}$, write  Ex$_{sp}(n,\mathcal{F})=G$ for convenience.
	\begin{theorem}\label{main}
		Suppose $k\geq 2$, $s\ge 2$ and $n\geq 36288\left(\lfloor\frac{s+1}{2}\rfloor\right)^8$.
		\begin{enumerate}[(1)]
			\item When $s$ is odd and $s\leq 2k-1$, {\rm{Ex}}$_{sp}(n,\{K_{k+1},\mathcal{L}_{s}\})=K_{\frac{s-1}{2}}\vee \overline{K}_{n-\frac{s-1}{2}}$ holds.\\
			 When $s$ is odd and $s\geq 2k+1$, {\rm{Ex}}$_{sp}(n,\{K_{k+1},\mathcal{L}_{s}\})=T_{\frac{s-1}{2},\,k-1}\vee \overline{K}_{n-\frac{s-1}{2}}$ holds.
			\item When $s$ is even and $s\leq 2k-2$, {\rm{Ex}}$_{sp}(n,\{K_{k+1},\mathcal{L}_{s}\})=K_{\frac{s}{2}-1}\vee \left(P_2\cup \overline{K}_{n-\frac{s}{2}-1}\right)$ holds.
			 When  $s$ is even and $s\geq 2k$, {\rm{Ex}}$_{sp}(n,\{K_{k+1},\mathcal{L}_{s}\})=T_{\frac{s}{2}-1,\,k-1}\vee \overline{K}_{n-\frac{s}{2}+1}$ holds.
		\end{enumerate}
	\end{theorem}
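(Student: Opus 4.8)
The plan is the two-phase strategy standard for Brualdi-Solheid-Tur\'{a}n problems. Throughout put $m=\lfloor\frac{s-1}{2}\rfloor$, the common core size of the conjectured extremal graphs; note $m+1=\lfloor\frac{s+1}{2}\rfloor$, so the hypothesis reads $n\ge 36288(m+1)^{8}$. \emph{Phase 1 (lower bound).} Let $G_0$ be the conjectured extremal graph in the relevant one of the four cases. One checks directly that $G_0$ is $K_{k+1}$-free: its clique number is $\omega+1$, or $\omega+2$ when a $P_2$ is attached, where $\omega$ is the clique number of the core ($K_m$ or $T_{m,k-1}$), and the case hypotheses make this at most $k$. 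And $G_0$ is $\mathcal L_s$-free: if $F$ is a linear forest in $G_0$ with core $D$, then $2e(F[D])+e_F(D,V\setminus D)=\sum_{v\in D}d_F(v)\le 2m$, so $e(F)\le 2m+e(F[V\setminus D])\le s-1$, because $V\setminus D$ carries at most one edge and only when $s$ is even, in which case $2m=s-2$. Since each $G_0$ contains $K_m\vee\overline K_{n-m}$ as a spanning subgraph, $\rho(G_0)$ is at least the larger root of $x^{2}-(m-1)x-m(n-m)=0$, so $\rho(G_0)\ge\sqrt{m(n-m)}$; for the sharp comparisons at the end of the proof one keeps the exact characteristic polynomials of all four candidates.

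\emph{Phase 2 (the extremal graph).} Let $G\in{\rm Ex}_{sp}(n,\{K_{k+1},\mathcal L_s\})$, which we may take connected, and let $\mathbf x$ be a Perron vector normalised so that $x_{u^*}=\max_v x_v=1$; by Phase 1, $\rho:=\rho(G)\ge\sqrt{m(n-m)}$, which is huge compared with $s$ under the hypothesis on $n$. The workhorse is the identity
\[
\rho^{2}=d(u^*)+\sum_{vw\in E(G[N])}(x_v+x_w)+\sum_{\substack{v\in N,\ w\notin N\cup\{u^*\}\\ vw\in E}}x_w,\qquad N=N(u^*),
\]
together with the facts that $G[N]$ is $\mathcal L_{s-1}$-free (a linear forest in $N$ with $s-1$ edges can be lengthened through $u^*$) and, more globally, that $\mathcal L_s$-freeness prevents many edges from lying far from a small set of high-degree vertices; here one uses the Tur\'{a}n number ${\rm ex}(n,\mathcal L_s)=\binom m2+m(n-m)+[s\text{ even}]$, which may be quoted or reproved. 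This yields a crude structure: a set $D\subseteq V(G)$ of bounded size with $\sum_{v\in D}x_v$ close to $\rho$, every vertex of $D$ adjacent to all but $o(n)$ other vertices, and $G-D$ carrying only $O_s(1)$ edges. One then bootstraps by local moves: adding a missing edge between $D$ and $V\setminus D$, or replacing a small-weight vertex by a clone of a vertex of $D$, strictly increases $\rho$ unless it creates $K_{k+1}$ or a linear forest with $s$ edges; this forces $G=D\vee H$ with $e(H)\le1$, a linear-forest count forces $|D|\le m$, and $\rho(G)\ge\rho(G_0)$ forces $|D|\ge m$, so $|D|=m$. Finally, since $D\vee H$ with $H$ nonempty has clique number $\omega(D)+1$, $K_{k+1}$-freeness makes $D$ itself $K_k$-free, so maximality of $\rho$ turns $D$ into $T_{m,k-1}$ --- which equals $K_m$ exactly when $m\le k-1$ --- and the one possible edge of $H$ is present precisely when adding it keeps $G$ both $K_{k+1}$-free and $\mathcal L_s$-free, that is, precisely when $s$ is even and $m\le k-2$; these alternatives are exactly the four cases in the statement.

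The main obstacle is the crude-structure step: extracting a bounded dominating core from nothing but $\rho$ being large and $\mathcal L_s$-freeness. The difficulty is that ${\rm ex}(n,\mathcal L_s)$ has order $mn$, comparable to $\rho^{2}$, so the naive estimate $\rho^{2}\le d(u^*)+2e(G)$ is worthless; one must instead show that almost all of the $\sim mn$ edges already emanate from the eventual core, controlling the contribution of edges inside $V\setminus D$ and between low-weight vertices via localized $\mathcal L_s$-free estimates and the exact value of the Tur\'{a}n number. Once the core has been isolated and shown to dominate $G$, the remaining work --- pinning its size to exactly $m$, establishing the full join, identifying the Tur\'{a}n shape, and deciding the parity-dependent extra edge --- is a bounded optimization that is routine for $n$ as large as assumed.
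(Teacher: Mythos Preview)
Your proposal is essentially the paper's own strategy: prove a lower bound from the candidate, use the second eigen\-equation to extract a bounded high-degree core, show the core has size exactly $m$, and then finish by a local optimization. The paper implements the ``crude-structure step'' (which you correctly flag as the obstacle) by a three-level Perron-vector stratification $R'\supseteq R''\supseteq R$ cut at thresholds $\alpha,\,4\alpha,\,\tfrac{1}{2(m+1)}$ with $\alpha=\tfrac{1}{36(m+1)^3}$, successively bounding $|R'|$, forcing large degree on $R''$, bounding $|R''|$, and finally pinning $|R|=m$ with every $v\in R$ having degree $>(1-\tfrac{5}{6(m+1)})n$; your sketch is compatible with this but leaves the mechanism unspecified.

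Where your finishing step diverges slightly: you propose to first fix $D=T_{m,k-1}$ and then decide whether the extra edge in $H$ survives. This order is delicate in the even case $s\ge 2k$. If one insists on the edge in $H$, then $K_{k+1}$-freeness only forces $D$ to be $K_{k-1}$-free, so the competing candidate is $T_{m,k-2}\vee(P_2\cup\overline K_{n-m-2})$, and you must actually compare its spectral radius with that of $T_{m,k-1}\vee\overline K_{n-m}$. The paper avoids this head-to-head polynomial comparison: it uses Zykov symmetrization to make $G[R]$ complete multipartite, and then, exploiting the already-established inequality $x_w<\tfrac{1}{2(m+1)}x_z\le x_v$ for $w\notin R,\,v\in R$, performs two explicit Rayleigh-quotient edge swaps (deleting a cross edge and inserting one inside $R$, then deleting the $H$-edge and inserting edges from an endpoint to a colour class of $R$) to rule out first $\ell\le k-2$ and then $e(H)=1$. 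Your ``routine bounded optimization'' claim is not wrong, but it hides a genuine spectral comparison that the paper sidesteps with Perron-vector information you already have at that stage; using the same swap argument would make your endgame both shorter and cleaner.
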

		%\begin{equation*}
		%	\rho (G)\leq \left\{
		%		\begin{aligned}
		%			&\rho \left(K_{\frac{s-2}{2}}\vee \left(P_2\cup \overline{K}_{n-\frac{s}{2}-1}\right)\right),  \quad s \;{\rm{is\; even\;and\;}}  s\leq 2k-2;\\
		%			&\rho \left(G_{k}\left( n,\lfloor\frac{s-1}{2}\rfloor\right)\right),  \quad \quad\;\,\quad{\rm{otherwise}},
		%	\end{aligned}\right.
		%\end{equation*}
	%with equality if and only if $G=K_{\frac{s-2}{2}}\vee \left(P_2\cup \overline{K}_{n-\frac{s}{2}-1}\right)$ (resp. $G_{k}\left( n,\lfloor\frac{s-1}{2}\rfloor\right)$) when $s$ is even and $s\leq 2k-2$ (resp. otherwise).
	
When $s=2$, we have Ex$_{sp}(n,\{K_{k+1},\mathcal{L}_{2}\})=P_2\cup \overline{K}_{n-2}$ and then the conclusion holds. Therefore, we next assume $s\geq 3$.

\section{Some characterizations of the graphs in Ex$_{sp}\left(n,\{K_{k+1},\mathcal{L}_{s}\}\right)$}
 For a vertex $v\in V(G)$, the neighborhood of $v$ in $G$ is denoted by $N_{G}(v)$ or simply $N(v)$. Set  $N[v]=N(v)\cup \{v\}$.  Denote $N_G^2(v)$ or simply $N^2(v)$ by the set of vertices at distance two from $v$ in $G$.  	For $U\subseteq V(G)$, let $G[U]$ be the subgraph of $G$ induced by $U$. For $V_1,\,V_2\subseteq V(G)$ and $V_1\cap V_2=\emptyset$,  $G[V_1,\,V_2]$ denotes the induced bipartite graph with one partite set $V_1$ and the other partite set $V_2$.
 
 We always suppose
 $k\geq 2$, $s\geq 3$, $h=\lfloor\frac{s-1}{2}\rfloor$, $\alpha =\frac{1}{36(h+1)^3}$ and $n\geq \frac{28(h+1)^2}{\alpha ^2}$. Then 
 $n\geq 36288\left(\lfloor \frac{s+1}{2}\rfloor \right)^8$ holds.
 Let $G$ be a graph in Ex$_{sp}\left(n,\{K_{k+1},\mathcal{L}_{s}\}\right)$.
Lemma \ref{connected} shows that $G$ is connected. Furthermore, we may prove that there exists a vertex subset $R\subseteq V(G)$ containing vertices with larger degree, namely, for each vertex $v$ in $R$, $d_G(v)>\left(1-\frac{5}{6(h+1)}\right)n$ holds. We will show $\lvert R\rvert =h$. Then set $$W=\{v\in V(G):R\subseteq N_G(v)\},$$ which means $G[R,W]$ is a complete bipartite graph. The fact $d_G(v)>\left(1-\frac{5}{6(h+1)}\right)n$  implies 
\begin{equation*}
	\lvert W\rvert \geq n-\frac{5n\lvert R\rvert }{6(h+1)}>s+1.
\end{equation*}
  For any graph $G$ in Ex$_{sp}(n,\{K_{k+1},\mathcal{L}_s\})$, the fact that $K_{h,\,n-h}$ is $\{K_{k+1},\mathcal{L}_s\}$-free implies a natural lower bound on $\rho (G)$, 
\begin{equation}\label{lower}
	\rho(G)\geq \rho(K_{h,\,n-h})=\sqrt{h(n-h)}.
\end{equation}
	
	In 2020, Ning and Wang gave the Tur\'{a}n number of the linear forest $\mathcal{L}_s$.
	\begin{theorem}[{Ning and Wang~\cite[]{2020N}}]\label{Ning}
		For any integers $n$ and $s$ with $1\leq s\leq n-1$, we have 
		\begin{equation*}
			{\rm{ex}}(n,\mathcal{L}_{s})={\rm{max}}\,\bigg\{\binom{s}{2},\,\binom{n}{2}-\binom{n-\lfloor\frac{s-1}{2}\rfloor}{2}+c\bigg\},
		\end{equation*}
		where $c=0$ if $k$ is odd and $c=1$ otherwise.
	\end{theorem}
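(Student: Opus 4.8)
The plan is to establish the two bounds separately. For the lower bound, with $h=\lfloor\frac{s-1}{2}\rfloor$ one checks that $K_s\cup\overline{K}_{n-s}$, together with $K_h\vee\overline{K}_{n-h}$ when $s=2h+1$ and $K_h\vee(P_2\cup\overline{K}_{n-h-2})$ when $s=2h+2$, are all $\mathcal{L}_s$-free. Writing $\ell(G)$ for the largest number of edges in a linear-forest subgraph of $G$, the point is that $\ell$ equals exactly $s-1$ for each of these graphs: for the complete-bipartite-type graphs this is because, along any path, the vertices lying in the independent part are pairwise non-adjacent and hence separated by vertices of the $h$-vertex clique, so one path uses at most $h$ clique vertices and at most $h+1$ independent vertices, i.e.\ at most $2h$ edges, and summing over the components of a linear forest still gives at most $2h$ (resp.\ $2h+1$, once the edge of $P_2$ is available). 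Counting edges then gives $\binom{s}{2}$, $\binom{n}{2}-\binom{n-h}{2}$ and $\binom{n}{2}-\binom{n-h}{2}+1$ respectively, the asserted value of $\mathrm{ex}(n,\mathcal{L}_s)$.

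For the upper bound I would induct on $n$, the engine being a dichotomy on the minimum degree of an extremal graph $G$. If $\delta(G)\le h$, delete a vertex $v$ of minimum degree; $G-v$ is still $\mathcal{L}_s$-free, so by induction $e(G)\le\mathrm{ex}(n-1,\mathcal{L}_s)+h$, and since $\mathrm{ex}(n,\mathcal{L}_s)-\mathrm{ex}(n-1,\mathcal{L}_s)=h$ for all $n$ past the explicit crossover value at which $\binom{n}{2}-\binom{n-h}{2}+c$ overtakes $\binom{s}{2}$, the step closes. If instead $\delta(G)\ge h+1$, I claim $G$ cannot be $\mathcal{L}_s$-free, a contradiction. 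This reduces to the lemma: if $\delta(H)\ge d$ and $|V(H)|\ge 2d+2$, then $\ell(H)\ge 2d$. To prove it, use the classical fact that a connected graph with minimum degree $d$ contains a path on $\min\{|V|,2d+1\}$ vertices; so if some component of $H$ has at least $2d+1$ vertices it already contains a path with $2d$ edges, and otherwise every component $C$ satisfies $d+1\le|C|\le 2d$, hence $\delta(C)\ge|C|/2$, so $C$ is traceable (Dirac's theorem, the cases $|C|\le 2$ being trivial), and then $\ell(H)=|V(H)|-(\text{number of components})\ge|V(H)|-\frac{|V(H)|}{d+1}\ge 2d$. Applying the lemma with $d=h+1$ gives $\ell(G)\ge 2h+2\ge s$, the desired contradiction.

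The finitely many values of $n$ lying below the threshold at which the argument above applies are the genuinely delicate part and must be treated directly; there $\mathrm{ex}(n,\mathcal{L}_s)=\binom{s}{2}$, so one needs $e(G)\le\binom{s}{2}$ for every $\mathcal{L}_s$-free $G$ of that order. For this I would fix a maximum spanning linear-forest subgraph $F$ of $G$, with components $P^1,\dots,P^p$ and $p=n-\ell(G)\ge n-s+1$, and exploit that maximality of $\ell(G)$ forbids any $G$-edge joining endpoints of two distinct components (such an edge would merge them into a longer path). Short path-rotation arguments forbid further configurations — for instance, if the vertices of some $P^i$ induce a clique of $G$ then no vertex of another component may have a neighbour in that clique. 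Bounding $e(G)$ by $\binom{n}{2}$ minus the number of such forbidden pairs then yields $e(G)\le\binom{s}{2}$ throughout this narrow range of $n$.

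The two places where the real work lies are precisely these: obtaining the minimum-degree-to-linear-forest lemma with the right numerical threshold, so that the induction covers every large $n$, and the hands-on analysis of the boundary range $s\le n\lesssim\frac{5s}{4}$ in which $\binom{s}{2}$ is extremal. If the precise list of extremal graphs is also wanted, it drops out by tracking equality through the induction: $\delta(G)=h$ together with $G-v$ extremal forces $G$ to be one of the three graphs used for the lower bound.
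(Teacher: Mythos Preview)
The paper does not prove this statement at all: Theorem~\ref{Ning} is quoted from Ning and Wang~\cite{2020N} and used as a black box (only the consequence $e(G)\le hn$ in inequality~(\ref{upper}) and the structure of the extremal graphs in Remark~1 are actually needed downstream). So there is no ``paper's own proof'' to compare against.

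Since you have nevertheless sketched an argument, a few remarks on it for their own sake. Your lower-bound constructions and the large-$n$ induction are both sound; the minimum-degree lemma (every graph with $\delta\ge d$ and at least $2d+2$ vertices has a spanning linear forest with $\ge 2d$ edges) is correct and follows, as you say, from Dirac's path theorem plus an easy count when all components are small. The genuine gap is exactly where you identify it: the boundary range in which $\binom{s}{2}$ is the maximum. Your proposed attack there --- take a maximum linear forest $F$, forbid edges between endpoints of different components, and count forbidden pairs --- is the right instinct, but the sentence ``short path-rotation arguments forbid further configurations'' is doing all the work and is not yet a proof. In particular, one has to show that enough pairs are forbidden to bring $e(G)$ down from $\binom{n}{2}$ to $\binom{s}{2}$, and the naive count of endpoint--endpoint pairs alone gives only $\binom{p}{2}$ with $p=n-\ell(G)$, which is not enough when $n$ is close to $s$. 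The Ning--Wang paper handles this range by a more careful structural analysis; if you want a self-contained proof you will need to flesh this part out substantially.
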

	\begin{remark}
		Based on the result of Theorem \ref{Ning}, 
		  when $s\geq 3$, if $G$ is in {\rm{Ex}}$_{sp}(n,\{K_{k+1},\mathcal{L}_s\})$, then we have
			\begin{equation}\label{upper}
				e(G)\leq 	{\rm{ex}}(n,\mathcal{L}_{s})\leq \lfloor\frac{s-1}{2}\rfloor n.
			\end{equation}
	When $s$ is odd, we have $K_{\frac{s-1}{2}}\vee \overline{K}_{n-\frac{s-1}{2}}\in \,${\rm{Ex}}$(n,\mathcal{L}_s)$. When $s$ is even, we have $K_{\frac{s-2}{2}}\vee (P_2\cup \overline{K}_{n-\frac{s}{2}-1})\in \,${\rm{Ex}}$(n,\mathcal{L}_s)$.
	\end{remark}

	The following formulas are introduced in \cite{arxiv2}. For a graph $H$ and any two vertex subsets  $A$ and $B$ of $V(H)$, $e(A,\,B)$ denotes the number of the edges of $H$ with one end vertex in $A$ and the other in $B$. 
	\begin{equation}\label{,1}
		e(A,\,B)=e(A,\,B\setminus A)+e(A,\,A\cap B)=e(A,\,B\setminus A)+2e(H[A\cap B])+e(A\setminus B,\, A\cap B),
	\end{equation}
	\begin{equation}\label{,2}
		e(A,\,B)\leq e(H[A\cup B])+e(H[A\cap B])\leq 2e(H),
	\end{equation}
	\begin{equation}\label{,3}
		e(A,\,B)\leq \lvert A\rvert \lvert B\rvert .
	\end{equation}

	\begin{lemma}\label{connected}
		 If $G$ is in {\rm{Ex}}$_{sp}\left(n,\{K_{k+1},\mathcal{L}_{s}\}\right)$, then 
		 $G$ is connected.
	\end{lemma}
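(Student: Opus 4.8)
The plan is to assume for contradiction that $G$ is disconnected and then exhibit a $\{K_{k+1},\mathcal{L}_s\}$-free graph on $n$ vertices with spectral radius strictly larger than $\rho(G)$, which contradicts $G\in\mathrm{Ex}_{sp}(n,\{K_{k+1},\mathcal{L}_s\})$. So suppose $G$ is disconnected, let $G_1$ be a component with $\rho(G_1)=\rho(G)$, and set $n_1=|V(G_1)|<n$ and $t=n-n_1\ge1$. By \eqref{lower} and the choice of $G_1$ we have $\rho(G_1)=\rho(G)\ge\sqrt{h(n-h)}$, which by the hypothesis on $n$ exceeds $2s$ (a routine estimate). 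Since $\rho(G_1)\le\Delta(G_1)$, the graph $G_1$ has a vertex $z$ with $d_{G_1}(z)\ge\rho(G_1)>2s$.

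I would then form $G'$ from $G_1$ by discarding all the remaining components of $G$ and attaching in their place $t$ new pendant vertices, each joined only to $z$; so $G'$ has exactly $n$ vertices. Two of the three required properties are quick: since $G_1$ is connected, $G'$ contains $G_1$ together with one pendant edge at $z$ as a connected proper subgraph, and a connected graph gains in spectral radius when a pendant vertex is added, so $\rho(G')>\rho(G_1)=\rho(G)$; and $G'$ is $K_{k+1}$-free because any clique meeting the new vertices has at most two vertices (a new vertex is adjacent only to $z$ and $k\ge2$), while any clique avoiding them is a clique of $G_1\subseteq G$ and so has at most $k$ vertices.

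The heart of the argument — and the step I expect to require the real work — is showing that $G'$ is $\mathcal{L}_s$-free. Let $\ell$ be the maximum number of edges in a linear forest contained in $G_1-z$. Since the new vertices are pendants at $z$, deleting $z$ from any linear forest of $G'$ removes at most two edges and leaves a linear forest of $G_1-z$; hence every linear forest of $G'$ has at most $\ell+2$ edges, so it suffices to prove $\ell\le s-3$. Here the large degree of $z$ is decisive: were $\ell\ge s-2$, one could choose a linear forest $F'\subseteq G_1-z$ with exactly $s-2$ edges — hence on at most $2(s-2)$ vertices — and, since $z$ has more than $2s>2(s-2)+2$ neighbours in $G_1$, pick two neighbours $y_1,y_2$ of $z$ outside $V(F')\cup\{z\}$; then $F'$ together with $zy_1$ and $zy_2$ would be a linear forest with $s$ edges inside $G_1\subseteq G$, contradicting the $\mathcal{L}_s$-freeness of $G$. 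Therefore $\ell\le s-3$, $G'$ is $\mathcal{L}_s$-free, and $G'$ contradicts the extremality of $G$; hence $G$ is connected. The only mild pitfall to watch is that the other components of $G$ must be discarded outright: trying to re-route their edges into $G_1$ could create a long linear forest and destroy $\mathcal{L}_s$-freeness.
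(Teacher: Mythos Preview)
Your proof is correct and follows essentially the same approach as the paper: attach pendant vertex/vertices at a maximum-degree vertex of the spectral-maximizing component and verify the result remains $\{K_{k+1},\mathcal{L}_s\}$-free with strictly larger spectral radius. The paper's version is slightly leaner --- it attaches a \emph{single} pendant edge (filling the remaining $n-|V(G_1)|-1$ slots with isolated vertices) and handles $\mathcal{L}_s$-freeness by a direct one-edge swap $uv\mapsto uw$ with $w\in N_{G_1}(u)\setminus V(L)$, rather than your detour through bounding the maximum linear forest in $G_1-z$; but the underlying idea is identical.
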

\begin{proof}
	 Suppose to the contrary that  $G_1$ is a component of $G$ with 
	$\rho(G_1)=\rho(G)\geq \sqrt{h(n-h)}$ and then $\lvert V(G_1)\rvert\leq n-1$. Let $u\in V(G_1)$ with the maximum  degree in $G_1$, $G'$ be the graph obtained from $G_1$ by adding a pendent edge $uv$ at $u$ and $n-\lvert V(G_1)\rvert -1$ isolated vertices.  If there is a copy  $K_{k+1}$ in $G'$, then there is a copy  $K_{k+1}$ in $G_1$, a contradiction. If there is a copy $L$ of $\mathcal{L}_{s}$ in $G'$, then $uv\in E(L)$. As $d_{G_1}(u)\geq \rho (G_1)\geq \sqrt{h(n-h)}>s$, we may find a vertex $w\in N_{G_1}(u)\setminus V(L)$. Replace $uv$ with $uw$, and then there is a copy  $L$ in $G$, a contradiction.
	Therefore $G'$ is $\{K_{k+1},\mathcal{L}_s\}$-free.
	  While by Perron-Frobenius Theorem, we have
	  $\rho(G')>\rho (G_1)=\rho(G)$. 
	\end{proof}
	  Let \textbf{x} be the Perron vector of $G$, i.e., $A(G)\textbf{x}=\rho (G)\textbf{x}$ and $\Vert \textbf{x}\Vert _2=1$. Since $G$ is connected, by Perron-Frobenius Theorem   $\textbf{x}$ is a positive vector. 
	  Set $x_z=$max$\{x_v:\,v\in V(G)\}$.
	  Write $\rho=\rho (G)$ for convenience.  To show the existence of the set $R$, we need two auxiliary sets $R'$ and $R''$ in terms of the components of $\textbf{x}$. 
	  With extensive use of the eigenequations of $A$ and $A^2$, some  basic structural properties of the spectral extremal graphs can be deduced.
	  Cioab\v{a}, Desai and Tait \cite{2022C1} used this technique to  present the spectral  Erd\H{o}s-S\'{o}s Theorem. Recently, Zhai and Liu \cite{arxiv2} proved the spectral Erd\H{o}s-P\'{o}sa Theorem by using this technique.

	  Let $$R'=\{v\in V(G):x_v>\alpha x_z\},$$ and $\overline{R'}=V(G)\setminus R'$. The following lemma gives an upper bound of the cardinality of $R'$.
	  
	  \begin{lemma}\label{l1}
	  	$\lvert R'\rvert \leq 2\sqrt{hn}$.
	  \end{lemma}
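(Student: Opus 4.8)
The plan is to bound $|R'|$ by comparing two estimates for the quantity $\sum_{v \in R'} x_v^2$, exploiting the defining inequality $x_v > \alpha x_z$ for $v \in R'$ together with the lower bound $\rho \geq \sqrt{h(n-h)}$ from \eqref{lower}. First I would observe that since $\|\mathbf{x}\|_2 = 1$ and $x_z = \max_v x_v$, every component satisfies $x_v \leq x_z$, and in particular $1 = \sum_{v} x_v^2 \geq x_z^2$, so $x_z \leq 1$; combined with $\sum_v x_v^2 = 1$ this gives only crude control, so the real input must come from the eigenequation. The key step is to use the eigenequation at the vertex $z$ of maximum weight: $\rho x_z = \sum_{u \in N(z)} x_u \leq d_G(z) x_z$, which merely recovers $\rho \leq \Delta$; more usefully, squaring, $\rho^2 x_z = \sum_{u \in N(z)} \rho x_u = \sum_{u \in N(z)} \sum_{w \in N(u)} x_w$, i.e. $\rho^2 x_z = \sum_{w} |N(w) \cap N(z)| \, x_w \leq \sum_w (\text{something}) x_w$.

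The cleaner route, which I would actually pursue, is: from $A\mathbf{x} = \rho \mathbf{x}$ we get $\rho^2 = \rho^2 \|\mathbf{x}\|_2^2 = \mathbf{x}^\top A^2 \mathbf{x} = \sum_{v} (A^2\mathbf{x})_v x_v = \sum_v x_v \sum_{u} (A^2)_{vu} x_u$. Restricting attention to the contribution of $R'$ and using that $G$ is $K_{k+1}$-free (hence has bounded edge density by Turán, and more importantly has no large clique so walks of length $2$ are controlled), together with the Turán-type edge bound $e(G) \leq hn$ from \eqref{upper}, one estimates the number of walks of length $2$ with both endpoints in $R'$. Concretely, I expect to show $|R'| \cdot (\alpha x_z)^2 \cdot (\text{lower bound on } \rho^2 \text{ contribution}) \leq $ (count of length-$2$ walks through $R'$) $\cdot x_z^2$, where the walk count is at most $2 e(G) \leq 2hn$ by a double-counting argument. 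Since $\rho^2 \geq h(n-h)$, after cancelling $x_z^2$ one arrives at $|R'| \alpha^2 h(n-h) \lesssim 2hn$, hence $|R'| \lesssim \frac{2n}{\alpha^2(n-h)} \approx \frac{2}{\alpha^2}$; this would give a bound of order $\alpha^{-2}$ rather than the claimed $2\sqrt{hn}$. To get the sharper $2\sqrt{hn}$ one instead keeps the first moment: from $\rho x_z = \sum_{u \in N(z)} x_u \geq \sum_{u \in N(z) \cap R'} x_u > |N(z) \cap R'| \alpha x_z$, and more globally $\rho \sum_{v \in R'} x_v = \sum_{v \in R'} \sum_{u \in N(v)} x_u \leq \sum_{v \in R'} d_G(v) x_z \leq 2 e(G) x_z \leq 2hn\, x_z$, while the left side is at least $\rho \cdot |R'| \alpha x_z > \sqrt{h(n-h)}\,|R'|\alpha x_z$; but this only yields $|R'| < \frac{2hn}{\alpha \sqrt{h(n-h)}}$, still of order $\alpha^{-1}\sqrt{hn}$.

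The genuinely correct argument, and the one I would commit to, avoids $\alpha$ in the final bound by using the $\ell_2$ normalization directly: since $\sum_{v \in R'} x_v^2 \leq \sum_v x_v^2 = 1$, and we want to bound $|R'|$, we need a \emph{lower} bound on each $x_v$ for $v \in R'$ that does not degrade, so $\alpha$ must be eliminated by a different mechanism — namely by showing $x_z$ itself is bounded below, of order $n^{-1/2}$ or so, via $\rho x_z^2 \le \mathbf x^\top A \mathbf x \cdot x_z \le \ldots$. Actually the cleanest: $\rho = \mathbf{x}^\top A \mathbf{x} = 2\sum_{uv \in E} x_u x_v \leq 2 e(G) x_z^2 \leq 2hn\, x_z^2$, so $x_z^2 \geq \frac{\rho}{2hn} \geq \frac{\sqrt{h(n-h)}}{2hn} \geq \frac{1}{4\sqrt{hn}}$ (using $n-h \geq n/4$, say, valid in our range). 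Then for each $v \in R'$ we have $x_v^2 > \alpha^2 x_z^2 \geq \frac{\alpha^2}{4\sqrt{hn}}$, wait — that makes $|R'|$ small only if we divide into $1$, giving $|R'| < \frac{4\sqrt{hn}}{\alpha^2}$, which is worse.

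I conclude the intended proof must run: $1 = \sum_v x_v^2 \geq \sum_{v \in R'} x_v^2$; pair this with a lower bound $x_v^2 \geq c/(hn)$ valid \emph{not} from the $\alpha$-threshold but from a separate eigenequation estimate showing every vertex in $R'$ has weight $\gtrsim (hn)^{-1/2}$, whence $|R'| \leq hn/c$ — still too weak. The right balance uses \emph{both} bounds: the walk-counting gives $|R'| \lesssim \alpha^{-2}$ and the normalization is used only to pin $x_z$. Reconciling these with the stated $2\sqrt{hn}$ requires that $\alpha^{-2} = 1296(h+1)^6 \leq 2\sqrt{hn}$, i.e. $n \gtrsim (h+1)^{12}/h$, which is indeed implied by $n \geq 28(h+1)^2/\alpha^2 = 28 \cdot 36^2 (h+1)^8$. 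So the actual structure is: \textbf{(i)} show $x_z^2 \geq \rho/(2e(G)) \geq \sqrt{h(n-h)}/(2hn)$ via $\rho = 2\sum_{uv\in E}x_ux_v \le 2e(G)x_z^2$; \textbf{(ii)} for $v \in R'$, use the eigenequation $\rho x_v = \sum_{u \in N(v)} x_u$ and $A^2$-eigenequation to relate $d_G(v)$ and $|N^2(v)|$ to $\rho$ and $x_v/x_z > \alpha$, extracting $\sum_{v\in R'} d_G(v) \gtrsim \rho \alpha |R'| x_z / x_z$ paired against $\sum_{v \in R'} d_G(v) \le 2e(G) \le 2hn$, giving $|R'| \lesssim 2hn/(\rho\alpha) \lesssim 2\sqrt{hn}/(\alpha\sqrt{h(n-h)}\,/\sqrt{hn})$ — and the clean cancellation $\sqrt{h(n-h)} \cdot \sqrt{hn}/(hn) \geq $ const yields $|R'| \leq 2\sqrt{hn}$ once $\alpha$-factors are absorbed using the hypothesis $n \geq 28(h+1)^2/\alpha^2$.

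\smallskip
\noindent\textbf{Main obstacle.} The delicate point is step (ii): turning the pointwise lower bound $x_v > \alpha x_z$ into a useful \emph{aggregate} lower bound on $\sum_{v\in R'}(A^2\mathbf x)_v\,x_v$ or on $\sum_{v \in R'} d_G(v)$, so that it can be played off against the \emph{upper} bounds $e(G) \le hn$ (from \eqref{upper}) and $\rho^2 \geq h(n-h)$ (from \eqref{lower}) without losing more than a constant factor. The $\mathcal{L}_s$-freeness enters through \eqref{upper} to cap the total number of edges, and $K_{k+1}$-freeness is not essential for this particular lemma. I expect the bookkeeping — choosing whether to count walks of length $1$ or length $2$, and where exactly to invoke $n \geq 28(h+1)^2/\alpha^2$ to clean up the $\alpha$'s — to be the only real work; the inequalities themselves are elementary applications of Cauchy–Schwarz and the Rayleigh-quotient identities $\rho = 2\sum_{uv\in E}x_ux_v$ and $\rho^2 = \|A\mathbf x\|_2^2$.
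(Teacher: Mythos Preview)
Your proposal never lands on a working argument, and the gap is concrete: every route you sketch loses at least one factor of $\alpha^{-1}$ that you cannot recover. Your ``first-moment'' bound
\[
\rho\,\alpha x_z\,|R'| < \rho\sum_{v\in R'}x_v = \sum_{v\in R'}\sum_{u\in N(v)}x_u \le \Bigl(\sum_{v\in R'}d_G(v)\Bigr)x_z \le 2e(G)x_z \le 2hn\,x_z
\]
gives $|R'| < \tfrac{2hn}{\alpha\rho} \approx \tfrac{2\sqrt{hn}}{\alpha}$, which is too large by exactly $\alpha^{-1}=36(h+1)^3$. Your claim that this extra factor can be ``absorbed using the hypothesis $n\ge 28(h+1)^2/\alpha^2$'' is wrong: the bound $2\sqrt{hn}/\alpha$ only gets \emph{worse} relative to the target $2\sqrt{hn}$ as $n$ grows, and your numerical check that $\alpha^{-2}\le 2\sqrt{hn}$ is both irrelevant (you need to kill $\alpha^{-1}$, not $\alpha^{-2}$) and false in the stated range.

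The missing idea is to \emph{split the neighbourhood} $N(v)$ of each $v\in R'$ into $N(v)\cap R'$ and $N(v)\cap\overline{R'}$, and to bound the eigenvector entries differently on the two pieces: by $x_z$ on $R'$ and by $\alpha x_z$ on $\overline{R'}$. This gives, for each $v\in R'$,
\[
\sqrt{h(n-h)}\,\alpha x_z \le \rho x_v \le d_{G[R']}(v)\,x_z + d_{G[R',\overline{R'}]}(v)\,\alpha x_z,
\]
and summing over $v\in R'$ yields
\[
\alpha\sqrt{h(n-h)}\,|R'| \le 2e(G[R']) + \alpha\,e(R',\overline{R'}) \le 2h|R'| + \alpha hn,
\]
using $e(G[R'])\le h|R'|$ and $e(R',\overline{R'})\le e(G)\le hn$ from \eqref{upper}. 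The point is that the dominant term $\alpha hn$ on the right \emph{already carries an $\alpha$}, so it cancels against the $\alpha$ on the left; the residual $2h|R'|$ is then absorbed into the left side because $n\ge 28(h+1)^2/\alpha^2$ forces $\alpha\sqrt{h(n-h)}-2h\ge \tfrac12\alpha\sqrt{hn}$, whence $|R'|\le \frac{\alpha hn}{\frac12\alpha\sqrt{hn}} = 2\sqrt{hn}$. None of your attempts introduces this two-tier weighting, and without it the $\alpha$ does not cancel.
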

  \begin{proof}
  	For each vertex $v\in R'$, we have 
  	\begin{align}\label{R}
  		\sqrt{h(n-h)}\alpha x_z&\leq \rho x_v=\sum_{u\in N(v)}x_u\leq \sum_{u\in (N(v)\cap R')}x_z+\sum_{u\in (N(v)\setminus R')}\alpha x_z \notag \\ &=d_{G[R']}(v)x_z+d_{G[\overline{R'}]}(v)\alpha x_z.
  	\end{align}
  Summing the inequality (\ref{R}) for all $v\in R'$, we have
  \begin{equation*}
  	\lvert R'\rvert \sqrt{h(n-h)}\alpha x_z\leq \sum_{v\in R'}d_{G[R']}(v)x_z+\sum_{v\in R'}d_{G[\overline{R'}]}(v)\alpha x_z
  	=2e(G[R'])x_z+e(R',\,\overline{R'})\alpha x_z. 
  \end{equation*}
If $\lvert R'\rvert \leq s$, the conclusion holds. Now suppose $\lvert R'\rvert \geq s+1$. Since $G$ is $\mathcal{L}_s$-free, by (\ref{upper}), we have $e(G[R'])\leq h\lvert R'\rvert $ and $e(R',\overline{R'})\leq hn$. Hence we obtain 
\begin{equation*}
	 \sqrt{h(n-h)} \alpha \lvert R'\rvert \leq 2h\lvert R'\rvert+\alpha hn.
\end{equation*} 
Since $n\geq \frac{28(h+1)^2}{\alpha ^2}$, $\alpha \sqrt{h(n-h)}-2h\geq \frac{1}{2}\alpha \sqrt{hn}$ holds, then we obtain
\begin{equation*}
	\lvert R'\rvert \leq \frac{\alpha hn}{\alpha \sqrt{h(n-h)}-2h}\leq \frac{\alpha hn}{\frac{1}{2}\alpha \sqrt{hn}}=2\sqrt{hn}.
\end{equation*}
  \end{proof}

Set $R''=\{v\in V(G):x_v>4\alpha x_z\}$.
\begin{lemma}\label{l2}
	For each vertex $v\in R''$, $d_G(v)>\frac{1}{3}\alpha n$ holds.
\end{lemma}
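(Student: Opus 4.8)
The plan is to start from the eigenequation at $v$ and bound $\rho x_v$ from above in terms of $d_G(v)$, then combine with the lower bound $\rho x_v > 4\alpha x_z \rho \geq 4\alpha x_z \sqrt{h(n-h)}$ coming from the definition of $R''$ and the bound (\ref{lower}). For the upper bound, I would write $\rho x_v = \sum_{u \in N(v)} x_u$ and split the neighborhood $N(v)$ according to whether a neighbor lies in $R'$ or not: $\rho x_v \le d_{G[R']}(v)\, x_z + d_G(v)\,\alpha x_z \le |R'|\, x_z + d_G(v)\, \alpha x_z$. Using Lemma \ref{l1} this is at most $2\sqrt{hn}\, x_z + d_G(v)\,\alpha x_z$.

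Next I would combine the two inequalities. Cancelling $x_z$ (which is positive) gives
\begin{equation*}
4\alpha \sqrt{h(n-h)} \le 2\sqrt{hn} + d_G(v)\,\alpha,
\end{equation*}
so $d_G(v) \ge \frac{4\alpha\sqrt{h(n-h)} - 2\sqrt{hn}}{\alpha} = 4\sqrt{h(n-h)} - \frac{2\sqrt{hn}}{\alpha}$. The issue is that this last quantity is negative once $\alpha$ is small, so a direct single application of the eigenequation at $v$ is too weak; the factor $4$ in the definition of $R''$ is not enough to beat the $1/\alpha$ loss coming from the crude estimate $x_u \le x_z$ on neighbors in $R'$. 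This tells me the argument must instead use the $A^2$ eigenequation, i.e. a two-step walk count, which is presumably why the authors set things up with both $A$ and $A^2$ in the paragraph preceding Lemma \ref{l1}.

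So the real plan is: apply the eigenequation for $A^2$ at $v$, namely $\rho^2 x_v = \sum_{u \in N(v)} \rho x_u = \sum_{u} \big(\text{walks of length 2 from } v\big) x_{(\cdot)}$, more precisely $\rho^2 x_v = d_G(v) x_v + \sum_{w \in N^2(v)} (\text{number of common neighbors of } v,w)\, x_w$. Bounding $x_w \le x_z$ and the number of common neighbors of $v$ and any $w$ by $d_G(v)$, one gets $\rho^2 x_v \le d_G(v) x_v + d_G(v)\,|N^2(v)|\, x_z \le d_G(v) x_v + d_G(v)\, n\, x_z$. Since $\rho^2 \ge h(n-h)$ and $x_v > 4\alpha x_z$, the left side is at least $4\alpha h(n-h) x_z$, while $d_G(v) x_v \le d_G(v) x_z$ is lower order compared to $d_G(v) n x_z$; rearranging yields $d_G(v) \ge \frac{4\alpha h(n-h) x_z - d_G(v) x_v}{n x_z} \ge \frac{4\alpha h (n-h)}{n} - o(1)$, and since $h \ge 1$ and $n-h \ge n(1 - o(1))$ by the bound $n \ge \frac{28(h+1)^2}{\alpha^2}$, this comfortably exceeds $\frac{1}{3}\alpha n$ — wait, this gives only order $\alpha$, not $\alpha n$.

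Let me reconsider: the correct two-step estimate should instead bound the contribution of walks through $R'$ separately from those outside $R'$. Partition $N(v)$ into $N(v) \cap R''$ and the rest, or better: use that for a neighbor $u \notin R'$ we have $\rho x_u = \sum_{w \in N(u)} x_w \le d_G(u) x_z$, but more usefully that $u$ contributes little. The cleanest route I expect the authors take: from $\rho x_v = \sum_{u \in N(v)} x_u$, isolate that the neighbors with $x_u > 4\alpha x_z$ (those in $R''$) together with Lemma \ref{l1}-type bounds on $|R''| \le |R'| \le 2\sqrt{hn}$ force most of the weight $\rho x_v$ to come from a set of size at most $2\sqrt{hn}$, each of value at most $x_z$; hence $\rho x_v \le 2\sqrt{hn}\,x_z + d_G(v)\cdot 4\alpha x_z$, and now $\rho x_v \ge \sqrt{h(n-h)}\cdot 4\alpha x_z$ gives $d_G(v)\cdot 4\alpha \ge 4\alpha\sqrt{h(n-h)} - 2\sqrt{hn}$, i.e. $d_G(v) \ge \sqrt{h(n-h)} - \frac{\sqrt{hn}}{2\alpha}$. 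The hard part is exactly making this positive and at least $\frac{1}{3}\alpha n$: this needs $\sqrt{h(n-h)} - \frac{\sqrt{hn}}{2\alpha} \ge \frac{1}{3}\alpha n$, which with $\alpha = \frac{1}{36(h+1)^3}$ does NOT hold — so I conclude the genuinely necessary ingredient, and the main obstacle to write carefully, is a sharper bound on the weight carried by neighbors outside $R''$: namely $\sum_{u \in N(v) \setminus R''} x_u \le 4\alpha x_z \cdot d_G(v)$ must be replaced by using the $A^2$-equation to show $\sum_{u \in N(v)\setminus R''} x_u$ is in fact $o(x_z)$ unless $d_G(v)$ is large, closing the loop by a clean contradiction if $d_G(v) \le \frac{1}{3}\alpha n$. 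I would set $d_G(v) \le \frac{1}{3}\alpha n$ for contradiction, plug into $\rho x_v \le |R'| x_z + d_G(v)\, 4\alpha x_z \le 2\sqrt{hn}\, x_z + \frac{4}{3}\alpha^2 n\, x_z$, and check that the right side is strictly less than $4\alpha\sqrt{h(n-h)}\,x_z \le \rho x_v$ using $n \ge \frac{28(h+1)^2}{\alpha^2}$; the arithmetic with the explicit $\alpha$ is routine once the inequality is set up this way, and I expect that verifying this final numerical inequality is the only real content.
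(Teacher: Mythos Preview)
Your final approach does not work, and in fact you already diagnosed why earlier in your own write-up. The inequality you claim is ``routine'' is
\[
2\sqrt{hn} + \tfrac{4}{3}\alpha^2 n \;<\; 4\alpha\sqrt{h(n-h)},
\]
but since $\alpha = \frac{1}{36(h+1)^3} < \tfrac12$ we have $4\alpha\sqrt{h(n-h)} < 2\sqrt{hn}$, so the right side is already smaller than the first term on the left for every $n$. No choice of $n$ saves this. The single $A$-eigenequation is simply too weak here, because the term $|R'|\,x_z \le 2\sqrt{hn}\,x_z$ dominates $4\alpha\rho x_z$; you cannot fix this by swapping $R'$ for $R''$ or by tweaking constants.

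The paper does use the $A^2$-eigenequation, but not with the crude bound ``common neighbors $\le d_G(v)$'' that you tried (which, as you noticed, only yields $d_G(v) \gtrsim \alpha$ rather than $\alpha n$). The missing idea is a two-layer split of the second-neighborhood contribution. Write
\[
\rho^2 x_v = d_G(v)x_v + \sum_{u\in N(v)} d_{G[N(v)]}(u)\,x_u + \sum_{u\in N^2(v)} d_{G[N(v)]}(u)\,x_u,
\]
and split the last sum over $N^2(v)\cap R'$ versus $N^2(v)\setminus R'$. The $\setminus R'$ part is easy: each $x_u\le \alpha x_z$ and the total count $e(N(v),N^2(v)\setminus R') \le e(G) \le hn$ gives a contribution $\le \alpha h n\,x_z$. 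The $\cap R'$ part is the real obstacle, and the paper handles it with a preliminary claim: assuming $d_G(v)\le \tfrac{1}{3}\alpha n$, the $\mathcal{L}_s$-freeness of $G[N(v)\cup R']$ gives $e(G[N(v)\cup R'])\le h(d_G(v)+|R'|)$, and after subtracting the $d_G(v)$ edges at $v$ and using Lemma~\ref{l1} one gets
\[
e\bigl(G[(N(v)\cup R')\setminus\{v\}]\bigr) \;\le\; (h-1)d_G(v) + h|R'| \;\le\; \tfrac{5h-1}{3}\alpha n .
\]
This bounds $e(N(v),N^2(v)\cap R')$ by $O(\alpha n)$ rather than $O(\sqrt{hn})$, and that factor of $\alpha$ on the dangerous term is exactly what makes the comparison with $\rho^2 x_v \ge 4\alpha h(n-h)x_z$ close to a contradiction (one obtains $n\le 6h$). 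The step you were missing is this edge-count claim on $N(v)\cup R'$; without it the $R'$-neighbors at distance two cannot be controlled.
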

\begin{proof}
	To prove the lemma, we may prove the following claim first.
	
		\textbf{Claim.}
		For any vertex $v$ in $R'$, if $d_G(v)\leq \frac{1}{3}\alpha n$, then $e\left(G[(N(v)\cup R')\setminus \{v\}]\right)\leq \frac{5h-1}{3}\alpha n$ holds.
		
		From $$4\sqrt{h(n-h)}\alpha x_z\leq \rho x_v=\sum_{u\in N(v)}x_u\leq d_G(v)x_z,$$
		we have $d_G(v)\geq 4\sqrt{h(n-h)}\alpha \geq s+1$.
		Since $G[N(v)\cup R']$ is $\mathcal{L}_s$-free, by (\ref{upper}), we have
	\begin{equation*}
		e(G[N(v)\cup R'])\leq h(d_G(v)+\lvert R'\rvert ),
	\end{equation*}
which implies
\begin{align*}
	e(G[(N(v)\cup R')\setminus\{v\}])&\leq h(d_G(v)+\lvert R'\rvert )-d_G(v)\\
	&=(h-1)d_G(v)+h\lvert R'\rvert \\
	&\leq \frac{1}{3}\alpha (h-1)n+2h\sqrt{hn}\\
	&\leq \frac{5h-1}{3}\alpha n,
\end{align*}
where the last inequality follows from $n\geq\frac{28(h+1)^2}{\alpha ^2}\geq \frac{9h}{4\alpha ^2} $.

Now we prove Lemma \ref{l2}. Suppose to the contrary that there is a vertex $v\in R''$ with $d_G(v)\leq \frac{1}{3}\alpha n$. By the claim above and the fact 
\begin{equation*}
	\sum_{u\in (N^2(v)\cap R')}d_{G[N(v)]}(u)= e(N(v),\,N^2(v)\cap R')\leq e(G[(N(v)\cup R')\setminus\{v\}]),
\end{equation*}
we obtain 
\begin{equation}\label{2.1}
	\sum_{u\in (N^2(v)\cap R')}d_{G[N(v)]}(u)x_u\leq e(N(v),\,N^2(v)\cap R')x_z\leq \frac{(5h-1)\alpha nx_z}{3}.
\end{equation} 
As $G[N(v)]$ is $\mathcal{L}_s$-free, we obtain $e(G[N(v)])\leq hd_G(v)$. It follows 
\begin{equation}\label{2.2}
	\left(d_G(v)+2e(G[N(v)])\right)x_z\leq (2h+1)d_G(v)x_z\leq \frac{(2h+1)\alpha n x_z}{3}.
\end{equation}
Moreover, the fact $e(N(v),N^2(v)\setminus R')\leq hn$ implies 
\begin{equation}\label{2.3}
	\sum_{u\in (N^2(v)\setminus R')}d_{G[N(v)]}(u)x_u\leq e(N(v),N^2(v)\setminus R')\alpha x_z\leq \alpha hn x_z.
\end{equation}
Combining (\ref{2.1}) - (\ref{2.3}), we have
\begin{equation*}
	\begin{split}
		\rho ^2x_v&=d_G(v)x_v+\sum_{u\in N(v)}d_{G[N(v)]}(u)x_u+\sum_{u\in N^2(v)}d_{G[N(v)]}(u)x_u\\
	&\leq \big(d_{G}(v) +2e(G[N(v)])\big)x_z +\sum_{u\in (N^2(v)\cap R')}d_{G[N(v)]}(u)x_u+\sum_{u\in (N^2(v)\setminus R')}d_{G[N(v)]}(u)x_u \\
	&\leq \frac{(2h+1)\alpha n x_z}{3} +\frac{(5h-1)\alpha nx_z}{3}+\alpha hn x_z.
	\end{split}
\end{equation*}
On the other hand, by (\ref{lower}), we have
\begin{equation*}
	\rho ^2x_v\geq h(n-h)x_v\geq 4\alpha h(n-h)x_z.
\end{equation*}
Then we obtain the following inequality
\begin{equation*}
	4\alpha h(n-h)x_z\leq \frac{(2h+1)\alpha n x_z}{3} +\frac{(5h-1)\alpha nx_z}{3}+\alpha hn x_z.
\end{equation*}
Simplifying the inequality, we obtain $n\leq 6h$, which is a contradiction. Therefore, each vertex in $R'$ has degree larger than $\frac{1}{3}\alpha n$.
\end{proof}

\begin{lemma}\label{l3}
	$\lvert R''\rvert < \frac{3(h+1)}{\alpha}$.
\end{lemma}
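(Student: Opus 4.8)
The plan is to bound $|R''|$ by double counting the edges incident to $R''$: Lemma~\ref{l2} supplies a lower bound on the degrees of vertices of $R''$, while the Tur\'an number of $\mathcal{L}_s$ controls the total number of such edges. Since $R''\subseteq R'$, Lemma~\ref{l1} gives $|R''|\le|R'|\le 2\sqrt{hn}$, and by Lemma~\ref{l2} every $v\in R''$ satisfies $d_G(v)>\frac{1}{3}\alpha n$. Summing the degrees over $R''$,
\begin{equation*}
	\tfrac{1}{3}\alpha n\,|R''|<\sum_{v\in R''}d_G(v)=2e(G[R''])+e\big(R'',\,V(G)\setminus R''\big)\le e(G[R''])+e(G),
\end{equation*}
where the last inequality holds because $e(G[R''])$ and $e(R'',V(G)\setminus R'')$ count disjoint sets of edges of $G$. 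I want to stress that the cruder estimate $\sum_{v\in R''}d_G(v)\le 2e(G)$ is not good enough here, as it would only yield $|R''|<6h/\alpha$, which by $6h\ge 3(h+1)$ fails to give the claim.

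It remains to bound the two terms on the right. By (\ref{upper}) we have $e(G)\le hn$. For $e(G[R''])$, observe that $G[R'']$ is a subgraph of $G$ and hence $\mathcal{L}_s$-free; applying Theorem~\ref{Ning} to $G[R']\supseteq G[R'']$ (or the trivial bound $\binom{|R'|}{2}$ when $|R'|\le s$) and then Lemma~\ref{l1},
\begin{equation*}
	e(G[R''])\le e(G[R'])\le \max\Big\{\binom{s}{2},\,h|R'|\Big\}\le \binom{s}{2}+2h\sqrt{hn}.
\end{equation*}
Plugging these into the displayed inequality gives $\frac{1}{3}\alpha n\,|R''|<hn+2h\sqrt{hn}+\binom{s}{2}$, that is,
\begin{equation*}
	|R''|<\frac{3h}{\alpha}+\frac{6h^{3/2}}{\alpha\sqrt{n}}+\frac{3\binom{s}{2}}{\alpha n}.
\end{equation*}

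Finally, since $s\le 2h+2$ and $n\ge\frac{28(h+1)^2}{\alpha^2}=36288(h+1)^8$, a routine computation shows that the two error terms add up to strictly less than $\frac{3}{\alpha}$ (indeed $\frac{6h^{3/2}}{\alpha\sqrt n}$ and $\frac{3\binom s2}{\alpha n}$ are each a small multiple of $\frac1\alpha$ under this lower bound on $n$), whence $|R''|<\frac{3h}{\alpha}+\frac{3}{\alpha}=\frac{3(h+1)}{\alpha}$. The only delicate point in the argument is the refinement $\sum_{v\in R''}d_G(v)\le e(G)+e(G[R''])$ in place of $\le 2e(G)$; this is exactly what converts the useless bound $6h/\alpha$ into the required $3(h+1)/\alpha$, and it is legitimate precisely because $|R''|=O(\sqrt{hn})=o(n)$ forces $e(G[R''])$ to be negligible compared with $e(G)$.
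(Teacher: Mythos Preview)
Your proof is correct and follows essentially the same idea as the paper's: bound $\sum_{v\in R''}d_G(v)$ from below via Lemma~\ref{l2} and from above by $e(G)+e(G[R''])$, then use $e(G)\le hn$. The only bookkeeping difference is in how $e(G[R''])$ is controlled. The paper simply splits into cases: if $|R''|\le s$ the bound $|R''|<\tfrac{3(h+1)}{\alpha}$ is immediate, and if $|R''|\ge s+1$ then (\ref{upper}) gives $e(G[R''])\le h|R''|$ directly, so one obtains $\tfrac{1}{3}\alpha n\,|R''|<hn+h|R''|$ and hence $|R''|<\dfrac{hn}{\frac{1}{3}\alpha n-h}\le\dfrac{3(h+1)}{\alpha}$ in one clean step. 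Your route via $R''\subseteq R'$, Lemma~\ref{l1}, and the additive error $\binom{s}{2}+2h\sqrt{hn}$ is a bit more roundabout but arrives at the same conclusion; the final numerical check that the error terms sum to less than $3/\alpha$ is easily satisfied under the hypothesis $n\ge 28(h+1)^2/\alpha^2$.
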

\begin{proof}
	If $\lvert R''\rvert \leq s$, the conclusion holds. Now suppose $\lvert R''\rvert \geq s+1$. By (\ref{upper}), we have $e(G[R''])\leq h\lvert R''\rvert $. From Lemma \ref{l2}, $\sum_{v\in R''}d_G(v)>\frac{1}{3}\alpha n \lvert R''\rvert$ holds, and we get
	\begin{equation*}
		\sum_{v\in \overline{R''}}d_{G}(v)\geq e(\overline{R''},\, R'')=\sum_{v\in R''}d_{G}(v)-2e(G[R''])> \frac{1}{3}\alpha n\lvert R''\rvert -2h\lvert R''\rvert.
	\end{equation*}
Therefore, we prove the lemma
\begin{equation*}
	\begin{split}
		hn\geq e(G)&=\frac{1}{2}\sum_{v\in R''}d_G(v)+\frac{1}{2}\sum_{v\in \overline{R''}}d_G(v)\\
		&>\frac{1}{6}\alpha n \lvert R''\rvert +\frac{1}{6}\alpha n \lvert R''\rvert-h\lvert R''\rvert\\
		&=\frac{1}{3}\alpha n\lvert R''\rvert -h\lvert R''\rvert.
	\end{split}
\end{equation*}
As $n\geq \frac{28(h+1)^2}{\alpha ^2}\geq \frac{3h(h+1)}{\alpha}$, we have $\lvert R''\rvert<\frac{hn}{\frac{1}{3}\alpha n-h}\leq \frac{3(h+1)}{\alpha}.$
\end{proof}

\begin{lemma}\label{l4}
	If $v$ is a vertex with $x_v=mx_z$ and $\frac{1}{2(h+1)}\leq m\leq 1$, then $d_G(v)>\left(m-\frac{1}{6(h+1)}\right)n$ holds.
\end{lemma}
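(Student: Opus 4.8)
The plan is to determine $d_G(v)$ directly, by estimating $\rho^2x_v$ from both sides. From (\ref{lower}) we have the lower bound $\rho^2x_v\ge h(n-h)x_v=h(n-h)mx_z$. As a preliminary remark, $\rho x_v=\sum_{u\in N(v)}x_u\le d_G(v)x_z$ gives $d_G(v)\ge\rho m\ge\frac{\sqrt{h(n-h)}}{2(h+1)}$, which is larger than $s$ once $n$ is as large as assumed; this is what we need so that the bound $e(\cdot)\le h\lvert\cdot\rvert$ of (\ref{upper}) applies to every induced subgraph appearing below (each of which has at least $d_G(v)+1$ vertices).

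For the upper bound I would use the walk identity $\rho^2x_v=\sum_{w\in V(G)}\lvert N(v)\cap N(w)\rvert\,x_w$ and split the sum over $w=v$, over $w\in R'\setminus\{v\}$, and over $w\in\overline{R'}\setminus\{v\}$. The diagonal term is $\lvert N(v)\cap N(v)\rvert x_v=d_G(v)x_v\le d_G(v)x_z$. Since $\sum_{w}\lvert N(v)\cap N(w)\rvert=\sum_{u\in N(v)}d_G(u)\le 2e(G)\le 2hn$, the part over $\overline{R'}$ is at most $\alpha x_z\cdot 2hn=2h\alpha nx_z$, which is negligible because $\alpha=\frac1{36(h+1)^3}$.

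The main work is the contribution of $R'\setminus\{v\}$. Bounding it by $x_z\sum_{u\in N(v)}\lvert N(u)\cap(R'\setminus\{v\})\rvert$ and putting $W=N(v)\cup(R'\setminus\{v\})$, a direct count of incidences gives $\sum_{u\in N(v)}\lvert N(u)\cap(R'\setminus\{v\})\rvert\le e(G[W])+e(G[R'])$, the last term absorbing the edges inside $N(v)\cap R'$, which are counted twice. The key idea is then to re-insert $v$: every neighbour of $v$ lies in $N(v)\subseteq W$, so $e(G[W])=e(G[W\cup\{v\}])-d_G(v)\le h(d_G(v)+\lvert R'\rvert+1)-d_G(v)=(h-1)d_G(v)+h\lvert R'\rvert+h$, applying (\ref{upper}) to the $\mathcal{L}_s$-free graph $G[W\cup\{v\}]$. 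Combined with Lemma \ref{l1} (which controls $\lvert R'\rvert$ and hence $e(G[R'])$), this gives $\sum_{u\in N(v)}\lvert N(u)\cap(R'\setminus\{v\})\rvert\le (h-1)d_G(v)+4h\sqrt{hn}+h$, whence
\[
\rho^2x_v\le h\,d_G(v)\,x_z+\bigl(4h\sqrt{hn}+h+2h\alpha n\bigr)x_z .
\]
Comparing with the lower bound and cancelling $hx_z$ yields $d_G(v)\ge mn-hm-4\sqrt{hn}-1-2\alpha n$, and since $m\le 1$, $2\alpha n=\frac{n}{18(h+1)^3}$, and $n\ge\frac{28(h+1)^2}{\alpha^2}$, a short computation shows this exceeds $\bigl(m-\frac1{6(h+1)}\bigr)n$, which is the claim.

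The step I expect to be the main obstacle is producing the coefficient $h-1$ (rather than the naive $h$ or $2h$) in the estimate for $\sum_{u\in N(v)}\lvert N(u)\cap(R'\setminus\{v\})\rvert$: this is exactly what forces $d_G(v)$ to be essentially $mn$ instead of only a constant fraction of it, and it makes the constant $\frac1{6(h+1)}$ come out right uniformly for $s$ odd and $s$ even. It rests on counting the relevant cherries through the edge set of $G[N[v]\cup R']$, so that the $d_G(v)$ edges incident to $v$ absorb one factor of $h$. The rest is bookkeeping — verifying that the error terms $4h\sqrt{hn}$, $h$ and $2h\alpha n$ are dominated by $\frac{n}{6(h+1)}$ under the stated hypothesis on $n$.
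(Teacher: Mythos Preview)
Your argument is correct and follows a genuinely different route from the paper's. The paper proceeds by contradiction using the intermediate set $R''=\{v:x_v>4\alpha x_z\}$: it obtains the crucial coefficient $(h-1)$ from the observation that the bipartite graph $G[R''\setminus\{v\},\,N(v)\setminus R'']$ is $\mathcal{L}_{s-2}$-free (any linear forest there can be extended by a $P_3$ through $v$, since $d_G(v)$ is large), then invokes Lemmas~\ref{l2} and~\ref{l3} to control $\lvert R''\rvert$ and $e(G[R''])$, and finally reorganises the resulting inequality as a quadratic $g(m)$, using $g(m)\ge g(1)$ to force a contradiction. Your proof instead works only with $R'$ and Lemma~\ref{l1}, and gets the $(h-1)$ factor by a cleaner device: since $N(v)\subseteq W$, re-inserting $v$ costs exactly $d_G(v)$ edges, so $e(G[W])=e(G[W\cup\{v\}])-d_G(v)\le (h-1)d_G(v)+h\lvert R'\rvert$. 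This gives a direct lower bound on $d_G(v)$, with no contradiction argument and no need for Lemmas~\ref{l2} and~\ref{l3} (they are still used later, in Lemma~\ref{l5}, so your shortcut does not eliminate them from the paper). One small quibble: your parenthetical claim that ``each induced subgraph below has at least $d_G(v)+1$ vertices'' does not literally cover $G[R']$, but since you bound $e(G[R'])$ via Lemma~\ref{l1} rather than via~(\ref{upper}) this does no harm.
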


\begin{proof}
		Suppose to the contrary that there is a vertex $v$ with $x_v=mx_z$ $\left(\frac{1}{2(h+1)}\leq m\leq 1\right)$ and $d_{G}(v)\leq \left(m-\frac{1}{6(h+1)}\right)n.$ By the definition of $R''$, $v\in R''$ holds.
		Let $M=N(v)\cup N^2(v)$. From (\ref{upper}), we obtain 
		\begin{equation}\label{4.1}
			e(M\setminus R'',N(v))\leq 2e(G)\leq 2hn.
		\end{equation}
	Since $v\in R''$, we have $d_G(v)>\frac{1}{3}\alpha n$  from Lemma \ref{l2}. Then $G[R''\setminus \{v\},\,N(v)\setminus R'']$ is $\mathcal{L}_{s-2}$-free. Otherwise there is a copy of $\mathcal{L}_s$ in $G$ as the degree of $v$ in $G$ is large enough. Hence, Theorem \ref{Ning} implies 
	\begin{equation*}
		e\left(R''\setminus \{v\},\,N(v)\setminus R''\right)\leq (h-1)(d_G(v)+\lvert R''\rvert -1)+1.
	\end{equation*}
Then from (\ref{upper}) and (\ref{,1}), we have
\begin{align}\label{4.2}
	e\left(M\cap R'',N(v)\right)&\leq e(R''\setminus \{v\},\, N(v)) \notag \\
	&= e(R''\setminus \{v\},\,N(v)\setminus R'')+2e\left(G[R''\cap N(v)]\right)+e\left(R''\setminus N[v],\, R''\cap N(v)\right) \notag \\
	&\leq (h-1)(d_G(v)+\lvert R''\rvert -1)+2e(G[R''])+1.
\end{align} 

If $\lvert R''\rvert \geq s+1$, then by (\ref{upper}), $e(G[R''])\leq h\lvert R''\rvert$ holds. From Lemma \ref{l3}, $\lvert R''\rvert <\frac{3(h+1)}{\alpha}$ holds. We have 
\begin{equation}\label{4.3}
	(h-1)\lvert R''\rvert +2e(G[R''])\leq (h-1)\lvert R''\rvert +2h\lvert R''\rvert<\frac{(9h-3)(h+1)}{\alpha}.
\end{equation}

If $\lvert R''\rvert \leq s$, then $e(G[R''])<\frac{s(s-1)}{2}$ holds. We have
\begin{equation}\label{4.4}
	(h-1)\lvert R''\rvert +2e(G[R''])\leq  (h-1)s+s(s-1)< \frac{(9h-3)(h+1)}{\alpha},
\end{equation}
the last inequality holding as $\alpha =\frac{1}{36(h+1)^3}$.

Combining (\ref{4.1}) - (\ref{4.4}), we obtain
\begin{align*}
	\rho ^2x_v&=d_{G}(v)x_v+\sum_{u\in (M\setminus R'')}d_{G[N(v)]}(u)x_u+\sum_{u\in (M\cap R'')}d_{G[N(v)]}(u)x_u \notag\\
	&\leq d_{G}(v)x_v+e(M\setminus R'', \,N(v))4\alpha x_z +e(M\cap R'', \,N(v))x_z\\
		&< d_{G}(v)mx_z+8\alpha hnx_z+(h-1)d_{G}(v)x_z+\frac{(9h-3)(h+1)}{\alpha}x_z-(h-2)x_z.
\end{align*}
On the other hand, by (\ref{lower}), $\rho^2 x_v\geq h(n-h)mx_z$ holds. Therefore, we have
\begin{equation*}
	hnm-h^2m< d_G(v)m+8\alpha hn+(h-1)d_G(v)+\frac{(9h-3)(h+1)}{\alpha}-(h-2).
\end{equation*}
Since $d_G(v)\leq \left(m-\frac{1}{6(h+1)}\right)n$, we have 
\begin{equation*}
	-nm^2+\left(-h^2+\frac{6h+7}{6(h+1)}n\right)m+\frac{(h-1)}{6(h+1)}n-8\alpha hn< \frac{(9h-3)(h+1)}{\alpha}-(h-2).
\end{equation*}
Let $$g(x)=-nx^2+\left(-h^2+\frac{6h+7}{6(h+1)}n\right)x+\frac{(h-1)}{6(h+1)}n-8\alpha hn.$$ Then $g(x)\geq g(1)$ holds when $x\in \left[\frac{1}{2(h+1)},1\right]$. So $g(m)\geq g(1)$ holds and we have
\begin{equation*}
	\left(\frac{h}{6(h+1)}-8\alpha h\right)n< \frac{(9h-3)(h+1)}{\alpha }+h^2-(h-2).
\end{equation*}
As $\frac{h}{6(h+1)}-8\alpha h >\frac{\alpha h}{6}$, we have $n<\frac{54(h+1)+6\alpha h}{\alpha ^2}$. While it contradicts the fact $n\geq \frac{28(h+1)^2}{\alpha ^2}$.
\end{proof}

Let $R=\{v\in V(G):x_v\geq \frac{1}{2(h+1)}x_z\}$. Clearly $R\subseteq R''$ holds. We will prove that $R$ is the desired set. Set $N=N(z)\cup N^2(z)$.
\begin{lemma}\label{l5}
	$ d_G(v)> \left(1-\frac{5}{6(h+1)}\right)n$ holds for each $v$ in $R$.
\end{lemma}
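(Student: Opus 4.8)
\textbf{Proof proposal for Lemma \ref{l5}.}
I would argue by contradiction: suppose some $v\in R$ has $d_G(v)\le\bigl(1-\tfrac{5}{6(h+1)}\bigr)n$, and try to contradict the lower bound $\rho^{2}x_v\ge h(n-h)x_v$ implied by (\ref{lower}). Since $R\subseteq R''$, Lemma \ref{l2} gives $d_G(v)>\tfrac13\alpha n$, so $d_G(v)$ is of order $n$ and in particular far larger than $2(s-2)$. Writing $x_v=mx_z$, membership in $R$ gives $m\ge\tfrac1{2(h+1)}$, while Lemma \ref{l4} together with the hypothesis $d_G(v)\le\bigl(1-\tfrac5{6(h+1)}\bigr)n$ forces $m<1-\tfrac2{3(h+1)}$; both inequalities on $m$ will be needed at the end.

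First I collect the extremal‑number inputs. Because $d_G(v)>2(s-2)$, any copy of a member of $\mathcal{L}_{s-2}$ inside $G[N(v)]$ could be extended, by attaching a path $a_1va_2$ on two fresh vertices $a_1,a_2\in N(v)$, to a member of $\mathcal{L}_{s}$ in $G$; hence $G[N(v)]$ is $\mathcal{L}_{s-2}$-free, and Theorem \ref{Ning} gives $e(G[N(v)])\le(h-1)d_G(v)+O(h^{2})$. The same augmentation shows $G[N(v),\,R''\setminus N[v]]$ is $\mathcal{L}_{s-2}$-free, and from the proof of Lemma \ref{l4} we already know $G[R''\setminus\{v\},\,N(v)\setminus R'']$ is $\mathcal{L}_{s-2}$-free. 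Combining these with $\lvert R''\rvert<\tfrac{3(h+1)}{\alpha}$ (Lemma \ref{l3}), $e(G[R''])\le\max\{h\lvert R''\rvert,\binom{s}{2}\}$, and (\ref{,1})--(\ref{,3}), one gets $e\bigl(R''\setminus\{v\},\,N(v)\setminus R''\bigr)\le(h-1)d_G(v)+O(h^{5})$.

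Next, expand
\begin{equation*}
\rho^{2}x_v=d_G(v)x_v+\sum_{u\in N(v)}d_{G[N(v)]}(u)x_u+\sum_{u\in N^{2}(v)}\lvert N(v)\cap N(u)\rvert x_u
\end{equation*}
and estimate the last two sums by splitting each index according to whether it lies in $R''$, using $x_u\le x_z$ always and $x_u\le 4\alpha x_z$ for $u\notin R''$. Indices outside $R''$ contribute only $o(n)x_z$: the factor $4\alpha$ beats the crude $O(hn)$ bound on the relevant edges coming from (\ref{,2}) and (\ref{upper}), because $\alpha=\tfrac1{36(h+1)^{3}}$. For the indices in $R''$, after separating the edges with both ends in $R''$ (bounded by $O\bigl(e(G[R''])\bigr)x_z=O(h^{5})x_z$), the pieces coming from $N(v)\cap R''$ and from $N^{2}(v)\cap R''$ both reduce to $x_z$ times a number of edges from a subset of $R''\setminus\{v\}$ into $N(v)\setminus R''$; since $N(v)\cap R''$ and $N^{2}(v)\cap R''$ are disjoint subsets of $R''\setminus\{v\}$, their combined contribution is at most $\bigl[e(R''\setminus\{v\},N(v)\setminus R'')+O(h^{5})\bigr]x_z\le\bigl[(h-1)d_G(v)+O(h^{5})\bigr]x_z$. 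Altogether this yields an estimate of the form $\rho^{2}x_v\le\bigl[(m+h-1)d_G(v)+o(n)+C(h)\bigr]x_z$. Feeding in $\rho^{2}x_v\ge h(n-h)mx_z$, the bound $d_G(v)\le\bigl(1-\tfrac5{6(h+1)}\bigr)n$, the inequalities $m\ge\tfrac1{2(h+1)}$ and $m<1-\tfrac2{3(h+1)}$, and $n\ge\tfrac{28(h+1)^{2}}{\alpha^{2}}$ to absorb $o(n)+C(h)$, the aim is to deduce that $n$ is bounded by a function of $h$ alone — a contradiction; the precise threshold $\tfrac5{6(h+1)}$ is exactly what makes this numerical inequality close.

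The step I expect to be the main obstacle is making the estimate on $\rho^{2}x_v$ sharp enough. The vertices of $R''$ are few but may have degree as large as $n$, so the edges they send into $N(v)$ and $N^{2}(v)$ must be controlled through the $\mathcal{L}_{s-2}$-free structures above rather than the trivial bound $\lvert R''\rvert\,d_G(v)$, and one must take care not to double‑count the edges lying between $N(v)$, $N^{2}(v)$ and $R''$ when the two sums are merged (that merging is what keeps the coefficient of $d_G(v)$ down to $m+h-1$ rather than $2(h-1)$). A secondary delicate point is that the final inequality is tight to leading order against the conjectured extremal graph $K_{h}\vee\overline{K}_{n-h}$, so for larger $h$ it may not close from the crude $d_G(v)\le n$ alone and one likely has to keep $d_G(v)\le\bigl(1-\tfrac5{6(h+1)}\bigr)n$ and the Lemma \ref{l4} bound $m<1-\tfrac2{3(h+1)}$ active throughout — and, if even this is not enough, to insert first an auxiliary ``gap'' observation that no vertex has $x$-value strictly between $\tfrac1{2(h+1)}x_z$ and a value near $x_z$.
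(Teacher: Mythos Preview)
Your main line of argument --- expanding $\rho^{2}x_v$ at the putative bad vertex $v$ --- is exactly the calculation that proves Lemma~\ref{l4}, and it cannot yield more than Lemma~\ref{l4} already does. Tracking your own estimate $\rho^{2}x_v\le[(m+h-1)d_G(v)+o(n)]x_z$ together with $\rho^{2}x_v\ge h(n-h)mx_z$ gives, to leading order,
\[
hm\ \le\ (m+h-1)\Bigl(1-\tfrac{5}{6(h+1)}\Bigr),
\]
which rearranges to $(h-1)(1-m)<\dfrac{5(m+h-1)}{6(h+1)}$. At $m=\tfrac{1}{2(h+1)}$ the left side equals $(h-1)\tfrac{2h+1}{2(h+1)}$ while the right side equals $\tfrac{5(2h^{2}-1)}{12(h+1)^{2}}$; for every $h\ge 2$ the left side is larger, so no contradiction is obtained. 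In other words, your approach fails precisely in the regime $m$ close to the lower threshold $\tfrac{1}{2(h+1)}$, and no amount of bookkeeping on the $R''$ edges will fix this, because the underlying inequality is simply consistent with a vertex having $x_v=\tfrac{1}{2(h+1)}x_z$ and $d_G(v)\approx\tfrac{n}{3(h+1)}$.

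You correctly sense the difficulty in your last paragraph and propose a ``gap'' observation, but you give no mechanism for proving it, and the mechanism you have developed (the eigenequation at $v$) cannot supply it. The paper's proof works at the \emph{other} vertex: it expands $\rho^{2}x_z$, notes that $v\in N(z)\cup N^{2}(z)$ because both $v$ and $z$ have large degree (Lemma~\ref{l4} applied to $v$ and to $z$), and isolates the single term $d_{G[N(z)]}(v)\,x_v$ in the sum. Replacing $x_v$ by $x_z$ overcounts by $d_{G[N(z)]}(v)(x_z-x_v)=d_{G[N(z)]}(v)(1-m)x_z$, and the remaining terms are bounded as in your $R''$ analysis. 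Since $d_{G[N(z)]}(v)>\tfrac{n}{6(h+1)}$ and $1-m>\tfrac{2}{3(h+1)}$, this deficit is of order $\tfrac{n}{9(h+1)^{2}}$, which exceeds the slack in the estimate $\rho^{2}x_z\ge h(n-h)x_z$ and forces the contradiction. The key idea you are missing is that the smallness of $x_v$ must be played against the eigenequation at $z$, not at $v$.
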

\begin{proof}
	By Lemma \ref{l4}, it suffices to show that $x_v\geq \left(1-\frac{2}{3(h+1)}\right)x_z$ holds for each $v\in R$. Suppose to the contrary that there exists a vertex $v\in R$ such that $x_v=mx_z$, where $\frac{1}{2(h+1)}\leq m<1-\frac{2}{3(h+1)}$.
	Since $z,\,v\in R$, by Lemma \ref{l4} we have $d_G(v)>\left(m-\frac{n}{6(h+1)}\right)\geq \frac{n}{3(h+1)}$ and $d_G(z)>\left(1-\frac{1}{6(h+1)}\right)n$. Then we have 
	$d_{G[N(z)]}(v)> \frac{n}{6(h+1)}$. Hence $v\in (N\cap R'')$ holds.
	 On the one hand,  we have $d_G(z)+e(N\cap R'',N(z))\leq e(R'',N(z))$. On the other hand,  the fact $R''\cup (N\setminus R'')\subseteq V(G)$ and (\ref{upper}) imply
	\begin{equation}\label{5.1}
		e(R'',N(z))+e(N\setminus R'',N(z))\leq e(V(G),N(z))\leq 2hn.
	\end{equation}
Then we have
\begin{align}\label{5.2}
	&d_G(z)+e(N\cap R'',N(z))+e(N\setminus R'',N(z))4\alpha \notag\\
	\leq& e\left(R'',N(z)\right)+\left(2hn-e(R'',N(z))\right) 4\alpha\notag\\
	=&(1-4\alpha )e(R'',N(z))+8\alpha hn.
\end{align}
As $G[R'']$ is $\mathcal{L}_s$-free, $e(G[R''])\leq $ max $\{h\lvert R''\rvert,\,\binom{s}{2}\}< \frac{3h(h+1)}{\alpha }$. By   (\ref{upper}) and (\ref{,2}), we have 
\begin{equation}\label{5.3}
	e(R'',N(z))\leq e(R''\cup N(z))+e(R''\cap N(z))\leq e(G)+e(G[R''])< hn+\frac{3h(h+1)}{\alpha }.
\end{equation}
Combining (\ref{5.2}) and (\ref{5.3}), we may see that
\begin{align*}
	\rho ^2x_z&=d_G(z)x_z+\sum_{u\in (N\cap R'')}d_{G[N(z)]}(u)x_u+\sum_{u\in (N\setminus R'')}d_{G[N(z)]}(u)x_u\\
	&\leq \left(d_G(z)+e\left(N\cap R'',N(z)\right)\right)x_z+d_{G[N(z)]}(v)(x_v-x_z)+e(N\setminus R'',N(z))4\alpha x_z\\
	&\leq (1-4\alpha )e(R'',N(z))x_z+8\alpha hnx_z+d_{G[N(z)]}(v)(x_v-x_z) \\
	&<\left((1-4\alpha )\left(hn+\frac{3h(h+1)}{\alpha }\right)+8\alpha hn\right)x_z+d_{G[N(z)]}(v)(x_v-x_z)\\
	&=\left(hn+\frac{3h(h+1)}{\alpha}-12h(h+1)+4\alpha hn\right)x_z+d_{G[N(z)]}(v)(m-1)x_z.
\end{align*}
On the other hand $\rho ^2\geq h(n-h)$ and $m<1-\frac{2}{3(h+1)}$ hold, we may get 
\begin{equation}\label{5.4}
	\frac{2}{3(h+1)}d_{G[N(z)]}(v)< h^2+\frac{3h(h+1)}{\alpha}-12h(h+1)+4\alpha hn.
\end{equation}
It follows from (\ref{5.4}) that
\begin{equation*}
	\frac{n}{9(h+1)^2}-4\alpha hn<h^2+\frac{3h(h+1)}{\alpha}-12h(h+1),
\end{equation*}
which implies $n<\frac{(h+1)^2}{\alpha ^2}$, a contradiction.
\end{proof}

Now we prove the exact value of $\lvert R\rvert $. To do it, we need the following inequality presented by \cite{2020}.
\begin{lemma}[{Cioab\v{a}, Feng, Tait and Zhang~\cite[]{2020}}]\label{cup}
	Suppose  $S_1,\,\cdots,\,S_{k}$ are $k$ finite sets. Then 
	\begin{equation*}
		\lvert S_{1}\cap \cdots \cap S_{k}\rvert \geq \sum_{i=1}^{k}\lvert S_{i}\rvert -(k-1)\lvert \cup^{k} _{i=1}S_i\rvert.
	\end{equation*}
\end{lemma}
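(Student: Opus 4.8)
The plan is to prove this purely set-theoretic inequality by a double-counting argument over the union $U:=\bigcup_{i=1}^{k}S_{i}$. For each element $x\in U$, let $d(x)$ denote the number of indices $i\in\{1,\dots,k\}$ with $x\in S_{i}$, so that $1\le d(x)\le k$. First I would record the two elementary identities $\sum_{i=1}^{k}\lvert S_{i}\rvert=\sum_{x\in U}d(x)$ and $\lvert S_{1}\cap\cdots\cap S_{k}\rvert=\lvert\{x\in U: d(x)=k\}\rvert$, both obtained by summing indicator functions over $U$.

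The main step is the pointwise estimate $d(x)-(k-1)\le\mathbf{1}[d(x)=k]$, valid for every $x\in U$: if $d(x)=k$ both sides equal $1$, and if $d(x)\le k-1$ the left-hand side is at most $0$ while the right-hand side equals $0$. Summing this inequality over all $x\in U$ and substituting the two identities above gives
\[
\sum_{i=1}^{k}\lvert S_{i}\rvert-(k-1)\lvert U\rvert=\sum_{x\in U}\bigl(d(x)-(k-1)\bigr)\le\lvert\{x\in U: d(x)=k\}\rvert=\lvert S_{1}\cap\cdots\cap S_{k}\rvert,
\]
which is exactly the asserted bound, and the argument is self-contained.

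As an alternative route I would use induction on $k$. The base case $k=2$ is the inclusion--exclusion identity $\lvert S_{1}\cap S_{2}\rvert=\lvert S_{1}\rvert+\lvert S_{2}\rvert-\lvert S_{1}\cup S_{2}\rvert$. For the inductive step, apply this identity to the pair $S_{1}\cap\cdots\cap S_{k-1}$ and $S_{k}$, bound the term $\lvert S_{1}\cap\cdots\cap S_{k-1}\rvert$ from below by the induction hypothesis, and then enlarge $\lvert S_{1}\cup\cdots\cup S_{k-1}\rvert$ and $\lvert(S_{1}\cap\cdots\cap S_{k-1})\cup S_{k}\rvert$ up to $\lvert\bigcup_{i=1}^{k}S_{i}\rvert$ using monotonicity of cardinality under inclusion.

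Both arguments are elementary, so there is no genuine obstacle; the only point needing care---and the closest thing to a pitfall---is bookkeeping the direction of the inequalities, since the coefficient $-(k-1)$ is negative: discarding the nonpositive terms $d(x)-(k-1)$ (respectively, replacing the intermediate unions by the larger full union) must be carried out so that the right-hand side decreases, which is precisely how it is used above.
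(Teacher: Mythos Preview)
Your proof is correct; both the double-counting argument via $d(x)$ and the induction sketch are valid, and the bookkeeping of signs is handled properly. Note, however, that the paper does not supply its own proof of this lemma: it is quoted as a known inequality from Cioab\v{a}, Feng, Tait and Zhang~\cite{2020} and used as a black box in the proof of Lemma~\ref{l6}, so there is no in-paper argument to compare against.
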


\begin{lemma}\label{l6}
	$\lvert R\rvert=h$. 
\end{lemma}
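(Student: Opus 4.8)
The plan is to establish the two inequalities $\lvert R\rvert\le h$ and $\lvert R\rvert\ge h$ separately.

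For the upper bound I would argue by contradiction: assume $\lvert R\rvert\ge h+1$ and fix distinct vertices $v_1,\dots,v_{h+1}\in R$. By Lemma~\ref{l5} each $v_i$ satisfies $d_G(v_i)>\left(1-\tfrac{5}{6(h+1)}\right)n$, so applying Lemma~\ref{cup} to the $h+1$ sets $N_G(v_1),\dots,N_G(v_{h+1})$ (all contained in $V(G)$) gives
\[
\Bigl\lvert\bigcap_{i=1}^{h+1}N_G(v_i)\Bigr\rvert \ge \sum_{i=1}^{h+1}d_G(v_i)-hn > (h+1)\Bigl(1-\tfrac{5}{6(h+1)}\Bigr)n-hn = \frac{n}{6}.
\]
Deleting $v_1,\dots,v_{h+1}$ from this common neighbourhood still leaves more than $\tfrac{n}{6}-(h+1)>2(h+1)$ vertices, from which I choose distinct $u_1,u_1',\dots,u_{h+1},u_{h+1}'$. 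If $s$ is odd then $s=2h+1$, and the $h$ paths $u_iv_iu_i'$ $(1\le i\le h)$ together with the edge $v_{h+1}u_{h+1}$ form a subgraph of $G$ isomorphic to $hP_3\cup P_2$, a member of $\mathcal{L}_s$. If $s$ is even then $s=2h+2$, and the $h+1$ paths $u_iv_iu_i'$ $(1\le i\le h+1)$ form a subgraph isomorphic to $(h+1)P_3\in\mathcal{L}_s$. Either way $G$ contains a member of $\mathcal{L}_s$, a contradiction; hence $\lvert R\rvert\le h$.

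For the lower bound, note first that $z\in R$ (since $x_z=\max_v x_v\ge\tfrac{1}{2(h+1)}x_z$), so $\lvert R\rvert\ge 1$ and the claim is immediate when $h=1$. Assume $h\ge 2$ and, for contradiction, $t:=\lvert R\rvert\le h-1$. Counting walks of length two from $z$,
\[
\rho^2 x_z = d_G(z)x_z + \sum_{v\in R\setminus\{z\}}\lvert N_G(z)\cap N_G(v)\rvert\,x_v + \sum_{v\notin R}\lvert N_G(z)\cap N_G(v)\rvert\,x_v ,
\]
I would bound the first term by $nx_z$, the middle sum by $(t-1)nx_z\le(h-2)nx_z$ (each of its at most $t-1$ terms is at most $nx_z$), and the last sum by $\tfrac{hn}{h+1}x_z$, using $x_v<\tfrac{1}{2(h+1)}x_z$ for $v\notin R$ together with $\sum_{v\notin R}\lvert N_G(z)\cap N_G(v)\rvert\le\sum_{u\in N_G(z)}d_G(u)\le 2e(G)\le 2hn$, where the last step is (\ref{upper}). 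This yields $\rho^2<(h-1)n+\tfrac{hn}{h+1}$, which is strictly less than $h(n-h)=hn-h^2$ whenever $n>h^2(h+1)$, a condition comfortably guaranteed by $n\ge 36288(h+1)^8$ (equivalently $n\ge\frac{28(h+1)^2}{\alpha^2}$). This contradicts the lower bound (\ref{lower}), so $\lvert R\rvert\ge h$, and therefore $\lvert R\rvert=h$.

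I expect the first direction to be the more delicate one: the point is that $h+1$ high-degree vertices of $R$ force a common neighbourhood of linear size, which is precisely what is needed to realise a linear forest with $s=2h+1$ or $s=2h+2$ edges inside $G$, and the two parities of $s$ must be handled separately. The second direction is a routine two-step eigenvalue estimate once the edge bound $e(G)\le hn$ and the smallness of $x_v$ outside $R$ are invoked; the only thing to monitor is that the hypothesis on $n$ beats $h^2(h+1)$, which it does with enormous room to spare.
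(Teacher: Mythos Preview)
Your proof is correct and follows essentially the same approach as the paper: the upper bound uses Lemma~\ref{l5} together with Lemma~\ref{cup} to produce a large common neighbourhood and hence an $\mathcal{L}_s$-member (the paper simply notes that $(h+1)P_3$ suffices for both parities, while you make the parity split explicit), and the lower bound is the same two-step eigenvalue estimate from $\rho^2 x_z$ combined with $e(G)\le hn$ and $x_v<\tfrac{1}{2(h+1)}x_z$ off $R$. Your numerical bounds are marginally cruder ($n$ in place of $n-1$) but lead to the same contradiction, and your observation that $v_j\notin Q$ (since $v_j\notin N(v_j)$) actually makes the deletion of $v_1,\dots,v_{h+1}$ from $Q$ unnecessary, though harmless.
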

\begin{proof}
	Firstly, we prove $\lvert R\rvert>h-1$. Suppose to the contrary $\lvert R\rvert\leq h-1$.  By (\ref{upper}), we have 
	\begin{equation}\label{6.1}
		e(N\setminus R,\,N(z))\leq 2e(G)\leq 2hn.
	\end{equation}
The fact $N\cap R \subseteq R\setminus\{z\}$ and (\ref{,3}) imply
\begin{equation}\label{6.2}
	e(N\cap R,\,N(z))\leq \lvert N\cap R\rvert\cdot\lvert N(z)\rvert\leq \lvert R\setminus \{z\}\rvert\, d_G(z).
\end{equation}

Combining (\ref{6.1}) and (\ref{6.2}), we have
\begin{align}\label{2.15}
	\rho ^2x_z&=d_G(z)x_z+\sum_{u\in (N\cap R)}d_{G[N(z)]}(u)x_u+\sum_{u\in (N\setminus R)}d_{G[N(z)]}(u)x_u \notag \notag \\
	&\leq d_G(z)x_z+e(N\cap R,\,N(z))x_z+e(N\setminus R,\,N(z))\frac{1}{2(h+1)}x_z\notag \\
	&\leq d_G(z)x_z+\lvert R\setminus \{z\}\rvert d_G(z) x_z+\frac{2hn}{2(h+1)}x_z \notag \\
	&=\lvert R\rvert d_G(z)x_z +\frac{hn}{h+1}x_z\notag.
\end{align}
Together with the fact $\rho ^2\geq h(n-h)$, it follows
\begin{equation*}
	h(n-h)\leq (h-1)(n-1)+\frac{hn}{h+1},
\end{equation*}
which implies  $n\leq (h+1)(h^2-h+1)$, a contradiction. Hence we have $\lvert R\rvert >h-1$.

If $\lvert R\rvert \geq h+1$, then we suppose $\{v_1,\,v_2,\cdots,v_{h+1}\}\subseteq R$ and $Q=N(v_1)\cap \cdots \cap N(v_{h+1})$. By Lemma \ref{cup}, it follows
\begin{equation*}
	\begin{split}
		\lvert Q\rvert  &\geq \sum_{i=1}^{h+1}d_G(v_i)-hn > (h+1)\left(1-\frac{5}{6(h+1)}\right)n-hn\geq 2(h+1),
	\end{split}
\end{equation*}
the last inequality holding as $n\geq \frac{28(h+1)^2}{\alpha  ^2}$. Then we may find $(h+1)P_3$ in $G[R,Q]$ which implies $\mathcal{L}_s\subseteq G$, a contradiction.
Therefore, $\lvert R\rvert =h $ holds.
\end{proof}

The results of Lemma \ref{l5} and Lemma \ref{l6} ensure the existence of the set $R$. And then we may set $W$ as 
$W=\{v\in V(G):R\subseteq N_G(v)\}$. Then $\lvert W\rvert >s+1$ holds and $G[R,W]$ is a complete bipartite graph. In Section 3 we will prove $G[R]$ is a complete multipartite graph and $W=\overline{R}$. By further detailed discussions, the extremal graph $G$ will be completely characterized.

\section{Proof of Theorem \ref{main}}	
For two non-adjacent vertices $u,v$ in a graph $H$, denote the Zykov symmetrization $Z_{u,v}(H)$ to be the graph obtained from $H$ by deleting all edges incident to vertex $u$ and then adding new edges from $u$ to $N_H(v)$ (see \cite{1949Z}).
Zykov symmetrization operation does not increase the size of the largest cliques, and it may strictly increase the spectral radius (see Proposition \ref{p1}).

%By \cite{2022L}, $Z_{u,v}(H)$ has two properties.
 \begin{proposition}[{Li and Peng~\cite[]{2022L}}]\label{p1}
 	Suppose $H$ is a connected graph and $u,v$ are two non-adjacent vertices of $H$ with $N_H(u)\neq N_H(v)$. If $\sum_{w\in N_{H}(v)}x_w\geq \sum_{w\in N_{H}(u)}x_w$, then $\rho (Z_{u,v}(H))>\rho (H)$ holds.
\end{proposition}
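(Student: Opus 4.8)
The plan is to bound $\rho(H')$ from below, where $H'=Z_{u,v}(H)$, by the Rayleigh quotient of a test vector built from the Perron vector $\textbf{x}$ of $H$, and then substitute the eigenequations of $H$ into that bound. Write $\rho=\rho(H)$, normalise $\|\textbf{x}\|_2=1$, and recall $\textbf{x}>0$ since $H$ is connected. Because $u,v$ are non-adjacent, the edge $uv$ is missing from both $H$ and $H'$, and the symmetrization only alters the neighbourhood of $u$ (from $N_H(u)$ to $N_H(v)$); hence the edges of $H'$ avoiding $u$ are exactly the edges of $H$ avoiding $u$. I would take the test vector $\textbf{y}$ with $y_u=x_v$ and $y_w=x_w$ for $w\ne u$.

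First, splitting the edge set of $H'$ into edges incident to $u$ and edges avoiding $u$ gives
\[
\textbf{y}^{\top}A(H')\textbf{y}=\textbf{x}^{\top}A(H)\textbf{x}+2x_v\sum_{w\in N_H(v)}x_w-2x_u\sum_{w\in N_H(u)}x_w .
\]
Applying the eigenequations $\rho x_u=\sum_{w\in N_H(u)}x_w$, $\rho x_v=\sum_{w\in N_H(v)}x_w$ and $\textbf{x}^{\top}A(H)\textbf{x}=\rho$ turns this into $\rho\bigl(1+2(x_v^2-x_u^2)\bigr)$, while plainly $\textbf{y}^{\top}\textbf{y}=1+(x_v^2-x_u^2)$. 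Setting $t=x_v^2-x_u^2$ and observing that the hypothesis $\sum_{w\in N_H(v)}x_w\ge\sum_{w\in N_H(u)}x_w$ forces $x_v\ge x_u$, hence $t\ge0$, the Rayleigh principle gives
\[
\rho(H')\ \ge\ \frac{\textbf{y}^{\top}A(H')\textbf{y}}{\textbf{y}^{\top}\textbf{y}}\ =\ \frac{\rho(1+2t)}{1+t}\ =\ \rho+\frac{\rho t}{1+t}\ \ge\ \rho .
\]

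It then remains to make this strict. If $x_v>x_u$ then $t>0$ and the chain already yields $\rho(H')>\rho$. The case $x_v=x_u$ (so $t=0$) is the delicate one: then the chain is an equality, so $\rho(H')=\rho$ would force $\textbf{y}$ to be a Perron eigenvector of $H'$, i.e. $A(H')\textbf{y}=\rho\textbf{y}$. Since $N_H(u)\ne N_H(v)$, I would pick a vertex $a$ lying in exactly one of the two neighbourhoods; comparing the eigenequation of $H'$ at $a$ with the eigenequation of $H$ at $a$ — using $y_u=x_v=x_u$, $y_w=x_w$ for $w\ne u$, and that $N_{H'}(a)$ differs from $N_H(a)$ only in whether it contains $u$ — collapses to $x_u=0$ when $a\in N_H(u)\setminus N_H(v)$, or to $x_v=0$ when $a\in N_H(v)\setminus N_H(u)$, contradicting $\textbf{x}>0$. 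Thus $\rho(H')>\rho$ in every case.

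The one genuinely fiddly point is this final comparison: one has to keep careful track of how gaining or losing $u$ from $N_H(a)$ interacts with the substitution $y_u=x_v$, verifying that the term produced cancels exactly the $x_u$ (resp. $x_v$) left over from the eigenequation in $H$, so that a single positive coordinate survives as the contradiction. Everything else — the identity for $\textbf{y}^{\top}A(H')\textbf{y}$, the insertion of the eigenequations, and the equality case of the Rayleigh quotient for a symmetric matrix — is routine.
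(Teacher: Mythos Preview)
Your argument is correct. The paper itself does not supply a proof of this proposition; it is quoted from Li and Peng \cite{2022L} and used as a black box, so there is no in-paper proof to compare against. Your Rayleigh-quotient approach with the test vector $y_u=x_v$, $y_w=x_w$ for $w\neq u$, together with the eigenvector equality-case analysis at a vertex $a$ in the symmetric difference $N_H(u)\triangle N_H(v)$, is a clean and standard way to obtain the strict inequality; all the bookkeeping (in particular that $a\neq u,v$ in each case, and that $N_{H'}(a)$ differs from $N_H(a)$ only by gaining or losing $u$) checks out.
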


%We conclude here that the spectral version of Zykov's symmetrization stars with a $K_{k+1}$ free graph $H$ with $V(H)=\{v_1,\cdots,v_{n}\}$, and at each step takes two non-adjacent vertices $v_i$ and $v_j$ such that $\sum_{w\in N_{H}(v_i)}x_{w}>\sum_{w\in N_{H}(v_j)}x_{w}$, and deleting all edges incident to $v_j$, and adding new edges between vertex $v_j$ and the neighborhood $N_{H}(v_i)$. We do the same if $\sum_{w\in N_{H}(v_i)}x_{w}=\sum_{w\in N_{H}(v_j)}x_{w}$ and $N_{H}(v_i)\neq N_{H}(v_j)$ for $i<j$. The spectral version of Zykov's symmetrization does not increase the size of the largest clique and does not decrease the spectral radius.

The following lemma shows that for a complete multipartite graph the more balanced the graph is, the larger the spectral radius will be.
\begin{lemma}[{Wang, Hou and Ma~\cite[]{2023WH}}]\label{balance}
	For any complete $k$-partite graph $K_{n_1,\cdots,n_k}$ of order $n$, if there exist $i$ and $j$ with $n_i-n_j\geq 2$, then $\rho (K_{n_1,\cdots,n_{i}-1,\cdots, n_{j}+1,\cdots, n_k})>\rho (K_{n_1,\cdots,n_i,\cdots,n_j,\cdots,n_k})$ holds.
\end{lemma}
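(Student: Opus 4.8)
The plan is to combine the Rayleigh quotient with the explicit form of the Perron vector of a complete multipartite graph. Write $G=K_{n_1,\cdots,n_k}$ with parts $V_1,\cdots,V_k$, put $\rho=\rho(G)$, and let $\mathbf{x}$ be the Perron vector of $G$. Since the transposition swapping any two vertices of a common part is an automorphism of $G$ and the Perron vector is unique up to scaling, $\mathbf{x}$ is constant on each $V_t$; denote its value there by $x_t$. With $S=\sum_{t=1}^{k}n_tx_t$, the eigenequation at a vertex of $V_t$ reads $\rho x_t=\sum_{\ell\neq t}n_\ell x_\ell=S-n_tx_t$, whence
\begin{equation*}
x_t=\frac{S}{\rho+n_t}\qquad (1\leq t\leq k).
\end{equation*}
So $n_i>n_j$ forces $x_i<x_j$; the point of the proof is that the extra vertices available to $V_i$ still outweigh this.

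I would then realize the rebalancing as a local switch. Fix a vertex $u\in V_i$ and let $G'=K_{n_1,\cdots,n_i-1,\cdots,n_j+1,\cdots,n_k}$ be obtained from $G$ by deleting the edges from $u$ to $V_j$ and adding the edges from $u$ to $V_i\setminus\{u\}$. Since $\mathbf{x}$ is constant on parts,
\begin{equation*}
\mathbf{x}^{T}A(G')\mathbf{x}-\mathbf{x}^{T}A(G)\mathbf{x}=2x_u\Big(\sum_{w\in V_i\setminus\{u\}}x_w-\sum_{w\in V_j}x_w\Big)=2x_i\big((n_i-1)x_i-n_jx_j\big).
\end{equation*}
Plugging in the formula for $x_i$ and $x_j$ and clearing denominators,
\begin{equation*}
(n_i-1)x_i-n_jx_j=S\cdot\frac{(n_i-1-n_j)\rho-n_j}{(\rho+n_i)(\rho+n_j)},
\end{equation*}
so everything reduces to showing $(n_i-1-n_j)\rho>n_j$.

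Finally, the hypothesis $n_i-n_j\geq 2$ gives $n_i-1-n_j\geq 1$, so it suffices to check $\rho>n_j$. As $K_{n_i,n_j}$ is a subgraph of $G$ and $n_i\geq n_j+2$, monotonicity of the spectral radius under subgraphs gives $\rho\geq\rho(K_{n_i,n_j})=\sqrt{n_in_j}\geq\sqrt{(n_j+2)n_j}>n_j$ (if $n_j=0$ then $\rho>0=n_j$ holds trivially, since $G$ has at least two nonempty parts and hence an edge). Thus the right-hand side of the previous display is strictly positive, and because $\lVert\mathbf{x}\rVert_2=1$ we conclude
\begin{equation*}
\rho(G')\geq \mathbf{x}^{T}A(G')\mathbf{x}>\mathbf{x}^{T}A(G)\mathbf{x}=\rho(G),
\end{equation*}
as desired. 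The one step that needs care is precisely the comparison $(n_i-1)x_i>n_jx_j$: the larger block $V_i$ carries the strictly smaller Perron weight, so it is not obvious a priori; the explicit identity $x_t=S/(\rho+n_t)$ is what turns this into the transparent inequality $\rho>n_j$, which in turn is read off from the complete bipartite subgraph $K_{n_i,n_j}$. The remaining manipulations are routine Rayleigh-quotient bookkeeping.
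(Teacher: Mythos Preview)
The paper does not prove Lemma~\ref{balance}; it is quoted verbatim from Wang, Hou and Ma~\cite{2023WH} and used as a black box. So there is no in-paper argument to compare your proposal against.

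Your proof is correct. The symmetry argument giving $x_t=S/(\rho+n_t)$ is standard, and your algebraic reduction of the Rayleigh-quotient increment
\[
(n_i-1)x_i-n_jx_j=\frac{S\big((n_i-1-n_j)\rho-n_j\big)}{(\rho+n_i)(\rho+n_j)}
\]
is clean and accurate. The key inequality $\rho>n_j$ is correctly obtained from $\rho\geq\rho(K_{n_i,n_j})=\sqrt{n_in_j}>n_j$ (using $n_i\geq n_j+2$). One implicit assumption worth making explicit is that $G$ is connected, so that Perron--Frobenius guarantees a simple top eigenvalue and hence a Perron vector that is constant on each part; this holds because a complete $k$-partite graph with $k\geq2$ nonempty parts is connected, and the hypothesis $n_i-n_j\geq2$ forces $k\geq2$ and $n_i\geq2$. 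With that caveat noted, the argument is complete. The $n_j=0$ digression is unnecessary under the usual convention that all parts are nonempty, but it does no harm.
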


\vspace{0.1em}\noindent
\textbf{Proof of Theorem \ref{main} } 
Let $G$ be a graph in Ex$_{sp}(n,\{K_{k+1},\mathcal{L}_s\})$ and $n\geq 36288\left(\lfloor\frac{s+1}{2}\rfloor\right)^8$. We have proved that $G$ is connected. There is a vertex subset $R\subseteq V(G)$ such that $d_G(v)> \left(1-\frac{5}{6\lfloor\frac{s+1}{2}\rfloor} \right)n $ holds for each vertex $v\in R$ and $\lvert R\rvert =\lfloor\frac{s-1}{2}\rfloor$. Moreover, there is a vertex subset $W\subseteq V(G)$ such that $G[R,W]$ is a complete bipartite graph and $\lvert W\rvert >s+1$. 
%Then $\lfloor\frac{s-1}{2}\rfloor P_3\subseteq G[R,W]\subseteq G$ holds.

\vspace{0.8em}\noindent
\textbf{Claim.}
$G[R]$ is a complete multipartite  graph.

In fact we may prove that
every two non-adjacent vertices of $R$ have the same neighborhood in $G$.
Suppose to the contrary that there exist two non-adjacent vertices $u,v\in R$ with $N_G(u)\neq N_G(v)$. Without loss of generality, suppose $\sum_{w\in N_G(v)}x_w\geq \sum_{w\in N_G(u)}x_w$.  By Proposition \ref{p1}, we have $\rho(Z_{u,v}(G))> \rho(G)$. And $Z_{u,v}(G)$ is $K_{k+1}$-free. If $Z_{u,v}(G)$ contains a copy $L\in \mathcal{L}_s$, then $u\in V(L)$ and $G[V(G)\setminus\{u\}]$ contains a copy of $L'\in \mathcal{L}_{s-2}$. Since $d_G(u)>\left(1-\frac{5}{6\lfloor\frac{s+1}{2}\rfloor} \right)n>s$, there is a $P_3=u_1uu_2$ with $\{u_1,u_2\}\subseteq V(G)\setminus V(L')$. Then $G$ contains a linear forest with $s $ edges, a contradiction. Hence $Z_{u,v}(G)$ is $\{K_{k+1},\mathcal{L}_s\}$-free. This is a contradiction. So every two non-adjacent vertices of $R$ have the same neighborhood in $G$. Therefore, $G[R]$ is a complete multipartite  graph. 

 Suppose $G[R] $ is a complete $\ell$-partite graph. Since $G$ is $K_{k+1}$-free and $\lvert R\rvert =\lfloor\frac{s-1}{2}\rfloor $, we have $\ell \leq$ min $\{\lfloor\frac{s-1}{2}\rfloor,k-1\}$. Let its partition sets be $B_1,\cdots,B_{\ell}$, $\lvert B_i\rvert=b_i $ and  $\sum_{i=1}^{\ell }b_i=\lfloor\frac{s-1}{2}\rfloor $.   If there is a vertex in $B_i$ adjacent to $v\in \overline{R}$, then we have $B_i\subseteq N_G(v)$ as every two non-adjacent vertices of $R$ have the same neighborhood in $G$.
 Now we distinguish two cases according to the parity of $s$.
 
%\vspace{0.8em}\noindent
~\\

\textbf{Case 1.} $s$ is odd.

In this case, we will prove $G[\overline{R}]=\overline{K}_{n-\frac{s-1}{2}}$. Suppose to the contrary that there is an edge $uv$ in $G[\overline{R}]$. Since $\lvert W\rvert >s+1$ and $G[R,W]$ is a complete bipartite graph, $G[(R\cup W)\setminus\{u,v\}]$ contains a copy of $\frac{s-1}{2}P_{3}$. Then $G$ contains a copy of $P_2\cup \frac{s-1}{2} P_3$, which contradicts the assumption that $G$ is $\mathcal{L}_s$-free. 

Suppose $s\leq 2k-1$. By the fact that   $K_{\frac{s-1}{2}}\vee \overline{K}_{n-\frac{s-1}{2}}$ is $\{K_{k+1},\mathcal{L}_s\}$-free, we have  $G= K_{\frac{s-1}{2}}\vee \overline{K}_{n-\frac{s-1}{2}}$. 

Suppose $s\geq 2k+1$. Since $\ell \leq k-1$, $K_{b_1,\cdots,b_{\ell},n-\frac{s-1}{2}}$ is $K_{k+1}$-free. Furthermore, $K_{b_1,\cdots,b_{\ell},n-\frac{s-1}{2}}$ is $\mathcal{L}_s$-free as $K_{\frac{s-1}{2}}\vee \overline{K}_{n-\frac{s-1}{2}}$ is $\mathcal{L}_s$-free and $K_{b_1,\cdots,b_{\ell},n-\frac{s-1}{2}}\subseteq K_{\frac{s-1}{2}}\vee \overline{K}_{n-\frac{s-1}{2}}$.  Together with the fact that spectral radius of a graph does not decrease by adding an edge, so we have $G=K_{b_1,\cdots,b_{\ell},n-\frac{s-1}{2}}$ for some $b_1,\cdots, b_{\ell}$.
We claim  $\ell =k-1$. Otherwise let $G_2$ be the graph obtained from $G$ by adding new edges in $G[R]$ to make it be a complete $(k-1)$-partite graph. Clearly, $G_2$ is $K_{k+1}$-free. Then $G_2$ is $\{K_{k+1},\mathcal{L}_s\}$-free. While $\rho (G_2)>\rho (G)$ holds.  Therefore $G[R]$ is a complete $(k-1)$-partite graph.  Then  we have $G=K_{b_1,\cdots, b_{k-1},n-\frac{s-1}{2}}$.
Moreover Lemma \ref{balance} implies $\lvert b_i-b_j\rvert \leq 1$ for any $1\leq i<j\leq k-1$. Together with the fact $\sum_{i=1}^{k-1} b_i=\frac{s-1}{2}$,  $G=T_{\frac{s-1}{2},\,k-1}\vee \overline{K}_{n-\frac{s-1}{2}}$ holds. 
 
\vspace{0.8em}\noindent
\textbf{Case 2.} $s$ is even.

In this case, we will prove there is at most one edge in $G[\overline{R}]$. Suppose to the contrary that  there are two edges, say $u_1u_2$, $v_1v_2$  in $G[\overline{R}]$ (if $P_3\subseteq G[\overline{R}]$, let $u_2=v_1$). Since $\lvert W\rvert >s+1$ and $G[R,W]$ is a complete bipartite graph, $G[(R\cup W)\setminus\{u_1,u_2,v_1,v_2\}]$ contains a copy of $\frac{s-2}{2}P_{3}$. Then $G$ contains a copy of $\frac{s}{2}P_3$ or $\left(2P_2\cup \frac{s-2}{2}P_3\right)$, which contradicts to the assumption that $G$ is $\mathcal{L}_s$-free.

 Suppose $s\leq 2k-2$. Since  $K_{\frac{s}{2}-1}\vee \left(P_2\cup \overline{K}_{n-\frac{s}{2}-1}\right)$ is $\{K_{k+1},\mathcal{L}_s\}$-free, we have  $G=K_{\frac{s}{2}-1}\vee \left(P_2\cup \overline{K}_{n-\frac{s}{2}-1}\right)$.

Now we suppose $s\geq 2k$.
We claim $\ell =k-1$. Suppose to the contrary  $\ell \leq k-2$. As $K_{b_1,\cdots,b_{\ell}}\vee \left(P_2\cup \overline{K}_{n-\frac{s}{2}-1}\right)$ is $\{K_{k+1},\mathcal{L}_s\}$-free, we have 
$G=K_{b_1,\cdots,b_{\ell}}\vee \left(P_2\cup \overline{K}_{n-\frac{s}{2}-1}\right)$.
Let  $u_1u_2$ be the edge in $G[\overline{R}]$ and  $\{v_1, v_2\}\subseteq B_{\ell}$. Let $G_3$ be the graph obtained from $G$ by deleting the edge $u_1v_1$ and adding edge $v_1v_2$. If $G_3$ contains  a copy  $K_{k+1}$, then the $K_{k+1}$ contains the vertex $v_1$. However $G_3[N(v_1)]$ is $K_k$-free as $\ell \leq k-2$. Since $K_{\frac{s}{2}-1}\vee (P_2\cup \overline{K}_{n-\frac{s}{2}-1})$ is $\mathcal{L}_s$-free, then $G_3\subseteq K_{\frac{s}{2}-1}\vee (P_2\cup \overline{K}_{n-\frac{s}{2}-1})$ is $\mathcal{L}_s$-free. Therefore, $G_3$ is $\{K_{k+1},\mathcal{L}_s\}$-free. As $u_1\in \overline{R}$ and $v_2\in R$, we have $x_{u_1}< \frac{1}{2(h+1)}x_z\leq x_{v_2}$. Then by Rayleigh-Ritz Theorem we have
\begin{equation*}
	\rho(G_3)-\rho(G)\geq \textbf{x}^T(A(G_3)-A(G))\textbf{x}=2x_{v_1}x_{v_2}-2x_{v_1}x_{u_1}>0.
\end{equation*}
This contradiction implies  $\ell =k-1$.
Now we will prove $G[\overline{R}]=\overline{K}_{n-\frac{s}{2}+1}$.
If there is an edge $u_1u_2$ in $G[\overline{R}]$, without loss of generality, we may suppose $u_1$ is not adjacent to any vertex of   $B_{k-1}$  as $G$ is $K_{k+1}$-free. Let $G_4$ be the graph obtained from $G$ by deleting the edge $u_1u_2$ and
adding edges between $u_1$ and $B_{k-1}$. Then $G_4\subseteq K_{b_1,\cdots,b_{k-1}}\vee \overline{K}_{n-\frac{s}{2}+1}$   is $\{K_{k+1}, \mathcal{L}_s\}$-free.  Since $B_{k-1}\subseteq R$ and $u_2\in \overline{R}$,  $x_{u_2}<\frac{1}{2(h+1)}x_z\leq x_{w}$ holds for each $w\in B_{k-1}$. Then by Rayleigh-Ritz Theorem we have
\begin{equation*}
	\rho(G_4)-\rho(G)\geq \textbf{x}^T(A(G_4)-A(G))\textbf{x}=2x_{u_1}\left(\sum_{w\in B_{k-1}}x_{w}-x_{u_2}\right)>0.
\end{equation*}
This contradiction implies  $G[\overline{R}]=\overline{K}_{n-\frac{s}{2}+1}$. As $K_{b_1,\cdots ,b_{k-1},n-\frac{s}{2}+1}$ is $\{K_{k+1},\mathcal{L}_s\}$-free, we have  $G=K_{b_1,\cdots ,b_{k-1},n-\frac{s}{2}+1}$. Moreover Lemma \ref{balance} implies $\lvert b_i-b_j\rvert \leq 1$ for any $1\leq i<j\leq k-1$. Together with the  fact $\sum_{i=1}^{k-1}b_i=\frac{s}{2}-1$, 
$G=T_{\frac{s}{2}-1,\,k-1}\vee \overline{K}_{n-\frac{s}{2}+1}$ holds.

 \vspace{1em}\noindent
	{\textbf{Declaration}}
	
	The authors have declared that no competing interest exists.
	%references
	
\end{document}